\newlength{\defbaselineskip}
\newcommand{\setlinespacing}[1]%
           {\setlength{\baselineskip}{#1 \defbaselineskip}}
\numberwithin{equation}{section}
\newtheorem{thm}{Theorem}[section]
\newtheorem{cor}[thm]{Corollary}
\newtheorem{lem}[thm]{Lemma}
\newtheorem{prop}[thm]{Proposition}
\theoremstyle{definition}
\theoremstyle{remark}
\newtheorem{rem}[thm]{Remark}
\numberwithin{equation}{section}
\begin{document}

\title[Strichartz estimates for Schr\"odinger equations]
{Strichartz estimates for Schr\"odinger equations in weighted $L^2$ spaces and their applications}

\author{Youngwoo Koh and Ihyeok Seo}

\subjclass[2010]{Primary: 35B45, 35A01; Secondary: 35Q40, 42B35}
\keywords{Strichartz estimates, well-posedness, Schr\"odinger equations, Morrey-Campanato class.}
\thanks{Y. Koh was supported by NRF Grant 2016R1D1A1B03932049 (Republic of Korea).
I. Seo was supported by the TJ Park Science Fellowship of POSCO TJ Park Foundation
and by the NRF grant funded by the Korea government(MSIP) (No. 2017R1C1B5017496).}

\address{Department of Mathematics Education, Kongju National University, Kongju 32588, Republic of Korea}
\email{ywkoh@kongju.ac.kr}

\address{Department of Mathematics, Sungkyunkwan University, Suwon 440-746, Republic of Korea}
\email{ihseo@skku.edu}

\maketitle


\begin{abstract}
We obtain weighted $L^2$ Strichartz estimates
for Schr\"odinger equations
$i\partial_tu+(-\Delta)^{a/2}u=F(x,t)$, $u(x,0)=f(x)$, of general orders $a>1$
with radial data $f,F$ with respect to the spatial variable $x$,
whenever the weight is in a Morrey-Campanato type class.
This is done by making use of a useful property of maximal functions of the weights
together with frequency-localized estimates which follow from using
bilinear interpolation and some estimates of Bessel functions.
As consequences, we give an affirmative answer to a question posed in \cite{BBCRV}
concerning weighted homogeneous Strichartz estimates,
and improve previously known Morawetz estimates.
We also apply the weighted $L^2$ estimates to the well-posedness theory for the
Schr\"odinger equations with time-dependent potentials in the class.
\end{abstract}

\section{Introduction}
In this paper we consider the following Cauchy problem
for Schr\"odinger equations:
    \begin{equation}\label{sch}
    \begin{cases}
    i\partial_tu+(-\Delta)^{a/2}u=F(x,t),\\
    u(x,0)=f(x),
    \end{cases}
    \end{equation}
where $(x,t)\in\mathbb{R}^{n+1}$, $n\geq2$,
and $(-\Delta)^{a/2}$ is given for $a>1$
by means of the Fourier transform $\mathcal{F}f$ $(=\widehat{f}\,)$ as follows:
$$\mathcal{F}[(-\Delta)^{a/2}f](\xi)=|\xi|^{a}\widehat{f}(\xi).$$
These equations arise in mathematical physics.
Particular interest is granted to the fractional-order cases where $1<a<2$.
This is because fractional quantum mechanics has been recently introduced
by Laskin \cite{L} where it is conjectured that physical realizations may be limited to the fractional cases.
Of course, the classical case $a=2$ has attracted interest from the ordinary quantum mechanics.
The higher-order counterpart ($a>2$) of it has been also attracted for decades
from mathematical physics.
Especially when $a=4$, \eqref{sch} can be found in the formation and propagation
of intense laser beams in a bulk medium (\cite{K,KS}).

By Duhamel's principle, we have the solution of \eqref{sch} which can be given by
\begin{equation}\label{sol0}
u(x,t)=e^{it(-\Delta)^{a/2}}f(x)-i\int_0^t e^{i(t-s)(-\Delta)^{a/2}}F(\cdot,s)ds,
\end{equation}
where the evolution operator $e^{it(-\Delta)^{a/2}}$ is defined by
$$e^{it(-\Delta)^{a/2}}f(x)=\frac1{(2\pi)^n}\int_{\mathbb{R}^n}
e^{ix\cdot\xi}e^{it|\xi|^a}\widehat{f}(\xi)d\xi.$$
There has been a lot of work on a priori estimates for the solution
which control space-time integrability of \eqref{sol0}
in view of those of the Cauchy data $f$ and $F$.
This is because they play central roles in the study of (nonlinear) dispersive equations
({\it cf. \cite{C,T}}).
In the classical case $a=2$, such an estimate was first obtained by Strichartz \cite{Str}
in $L_{t,x}^q(\mathbb{R}^{n+1})$ norms.
Since then, Strichartz's estimate has been studied by many authors
\cite{GV,KT,CW,Ka,F,V2,Ko,LS,KoS3}
naturally in more general mixed norms $L_{t}^q(\mathbb{R};L_x^r(\mathbb{R}^{n}))$.
(See also \cite{CN,S,LS2} and references therein for different related norms.)
Similar estimates are also well known for the higher-order cases
and can be found in \cite{CHL}.
In recent years, much attention has been devoted to the fractional-order cases
under the radial assumptions that the Cauchy data $f,F$ are radial with respect
to the spatial variable $x$
(see \cite{Sh,Ke,GW,CL,CKS} and references therein).

\subsection{Weighted estimates}
In this paper we address the problem of obtaining the Strichartz estimates for \eqref{sol0}
on weighted $L^2$ spaces of the form $L^2(w(x,t)dxdt)$.
More precisely, we want to find a suitable function class of weights $w(x,t)\geq0$ for which
\begin{equation}\label{homo}
\big\|e^{it(-\Delta)^{a/2}}f \big\|_{L^2(w(x,t))}\leq C_w\|f\|_{L^2}
\end{equation}
and
\begin{equation}\label{inhomo}
\bigg\|\int_{0}^{t}e^{i(t-s)(-\Delta)^{a/2}}F(\cdot,s)ds\bigg\|_{L^2(w(x,t))}
\leq C_w\|F\|_{L^2(w(x,t)^{-1})}
\end{equation}
hold. 
(For simplicity, we are using the notation $L^2(w(x,t))$ instead of $L^2(w(x,t)dxdt)$.)

When $a=2$, Strichartz estimates in a weighted $L^2$ setting as above have been studied for
time-independent weights $w(x)$ in the Morrey-Campanato class $\mathcal{L}^{\beta,p}$ defined by the norm
$$\|w\|_{\mathfrak{L}^{\beta,p}}
:=\sup_{x\in\mathbb{R}^{n},r>0}r^\beta
\bigg( \frac{1}{r^{n}} \int_{Q(x,r)} w(y)^p dy \bigg)^{1/p}<\infty$$
for $\beta>0$ and $1\leq p\leq n/\beta$ (\cite{RV2,V,BBRV,S3}).
To handle time-dependent weights in this paper, we first introduce a function class $\mathcal{L}^{\beta,p}_{a-par}$
of weights $w(x,t)$ on $\mathbb{R}^{n+1}$ which are defined by the norm
$$\|w\|_{\mathfrak{L}^{\beta,p}_{a-par}}
:=\sup_{(x,t)\in\mathbb{R}^{n+1},r>0}r^\beta
\bigg( \frac{1}{r^{n+a}} \int_{Q(x,r) \times I(t,r^a) } w(y,s)^p dyds \bigg)^{1/p}<\infty$$
for $\beta>0$, $a\geq1$ and $1\leq p\leq(n+a)/\beta$.
Here, $Q(x,r)$ denotes a cube in $\mathbb{R}^n$ centered at $x$ with side length $r$,
and $I(t,l)$ denotes an interval in $\mathbb{R}$ centered at $t$ with length $l$.
Notice that $\mathcal{L}^{\beta,p}_{a-par}$ when $a=1$ is the Morrey-Campanato class on $\mathbb{R}^{n+1}$,
and it has the homogeneity
$$\|w(\lambda\cdot,\lambda^a\cdot)\|_{\mathfrak{L}^{\beta,p}_{a-par}}
=\lambda^{-\beta}\|w\|_{\mathfrak{L}^{\beta,p}_{a-par}}.$$
This motivates the definition of the class $\mathcal{L}^{\beta,p}_{a-par}$.
In other words, it is an anisotropic variant of the usual Morrey-Campanato class adapted to scaling consideration
$(x,t)\mapsto(\lambda x,\lambda^at)$.
In this regard, when $a=2$, this class is called the parabolic Morrey-Campanato class\footnote{It was independently considered by the second author
in the study of unique continuation.} and was already appeared in \cite{BBCRV}
concerning the homogeneous estimate \eqref{homo} (see Remark \ref{rem} below).
Now we shall call $\mathcal{L}^{\beta,p}_{a-par}$ $a$-parabolic Morrey-Campanato class.
Following \cite{BBCRV}, we also observe the following properties:

\medskip

\begin{itemize}
\item
$\mathfrak{L}^{\beta,p}_{a-par}=L^p$\, when\, $p=(n+a)/\beta$,\,
and\, $L^{p,\infty}\subset\mathfrak{L}^{\beta,p}_{a-par}$\, for\, $p<(n+a)/\beta$,

\medskip

\item
$\mathfrak{L}^{\beta,p}_{a-par}\subset\mathfrak{L}^{\beta,q}_{a-par}$\, for\, $q<p$.
\end{itemize}

\medskip

In \cite{KoS2}, the estimates \eqref{hop_H_par} and \eqref{inho_par}
were obtained by the authors particularly for higher-order cases where $a>(n+2)/2$,
with a more restrictive class $\mathfrak{L}^{\alpha,\beta,p}$ of weights $w$ satisfying
$$\|w\|_{\mathfrak{L}^{\alpha,\beta,p}}
:=\sup_{(x,t)\in\mathbb{R}^{n+1},r,l>0}r^\alpha l^\beta
\bigg( \frac{1}{r^n l} \int_{Q(x,r) \times I(t,l) }w(y,s)^p dyds\bigg)^{1/p}<\infty$$
for some $0<\alpha\leq n/p$, $0<\beta\leq 1/p$ and $p\geq1$ with the scaling condition $a=\alpha+a\beta$.
(Note that $\mathfrak{L}^{\beta,p}(\mathbb{R};\mathfrak{L}^{\alpha,p}(\mathbb{R}^n))
\subset\mathfrak{L}^{\alpha,\beta,p}(\mathbb{R}^{n+1})$.)
From the definition, it is easy to check that
when $l=r^a$,
$\mathfrak{L}^{\alpha,\beta,p}$ becomes equivalent to the $a$-parabolic Morrey-Campanato class
$\mathfrak{L}^{\alpha+a\beta,p}_{a-par}$.
Hence, $\mathfrak{L}^{\alpha,\beta,p}\subset\mathfrak{L}^{a,p}_{a-par}$
under the condition $a=\alpha+a\beta$.

Our result on the estimates \eqref{homo} and \eqref{inhomo}
deals with weights in $a$-parabolic Morrey-Campanato classes,
which are the most natural Morrey-Campanato type classes adapted to scaling structure of Schr\"odinger equations,
and is stated as follows:

\begin{thm}\label{thm_par}
Let $n\geq2$ and $w\in\mathfrak{L}^{a,p}_{a-par}$.
Assume that $f$ and $F$ are radial functions with respect to the spatial variable $x$.
Then we have
\begin{equation}\label{hop_H_par}
\big\|e^{it(-\Delta)^{a/2}}f \big\|_{L^2(w(x,t))}\leq C\|w\|_{\mathfrak{L}^{a,p}_{a-par}}^{1/2}\|f\|_{L^2}
\end{equation}
and
\begin{equation}\label{inho_par}
\bigg\|\int_{0}^{t}e^{i(t-s)(-\Delta)^{a/2}}F(\cdot,s)ds\bigg\|_{L^2(w(x,t))}
\leq C\|w\|_{\mathfrak{L}^{a,p}_{a-par} }\|F\|_{L^2(w(x,t)^{-1})}
\end{equation}
if\, $a>1$ and $a/(a-1)<p\leq(n+a)/a$.
\end{thm}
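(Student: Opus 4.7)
The plan is to reduce both estimates to a single non-retarded bilinear inequality via $TT^{*}$, decompose dyadically in spatial frequency, prove a frequency-localized bilinear bound using the radial Bessel-function representation of the kernel together with real bilinear interpolation, and sum the dyadic pieces against the weight through a maximal-function bound tailored to $\mathfrak{L}^{a,p}_{a-par}$.

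\emph{Step (i).} By $TT^{*}$ duality, \eqref{hop_H_par} is equivalent to the non-retarded bilinear estimate
\[
\Bigl|\iint \bigl\langle e^{i(t-s)(-\Delta)^{a/2}} F(\cdot,s),\,G(\cdot,t)\bigr\rangle_{L^{2}_{x}}\,ds\,dt\Bigr| \leq C\|w\|_{\mathfrak{L}^{a,p}_{a-par}}\|F\|_{L^{2}(w^{-1})}\|G\|_{L^{2}(w^{-1})},
\]
so I would aim at this bilinear bound. Christ--Kiselev is unavailable at the double $L^{2}_{t}$ endpoint, so the retarded estimate \eqref{inho_par} must be established by re-running the bilinear argument with the extra factor $\chi_{s<t}$ absorbed directly into the kernel analysis.

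\emph{Step (ii).} Inserting a spatial Littlewood--Paley decomposition $F=\sum_{k} P_{k}F$, $G=\sum_{k'} P_{k'}G$ with $P_{k}$ cutting off to $|\xi|\sim 2^{k}$, the anisotropic scaling $(x,t)\mapsto(2^{-k}x, 2^{-ak}t)$ combined with $\|w(\lambda\cdot,\lambda^{a}\cdot)\|_{\mathfrak{L}^{a,p}_{a-par}}=\lambda^{-a}\|w\|_{\mathfrak{L}^{a,p}_{a-par}}$ reduces each diagonal bilinear piece $I_{k,k}$ to the unit-frequency quantity $I_{0,0}$. For $I_{0,0}$, radiality of $F,G$ turns the spatial kernel of $e^{i(t-s)(-\Delta)^{a/2}}P_{0}$ into a Bessel-function integral. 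Pointwise Bessel asymptotics in the light-cone region of the $a$-dispersion and in its complement supply two natural bilinear endpoints, and I would obtain the frequency-localized estimate by real bilinear interpolation between them. The same analysis with frequency separation yields an off-diagonal bound of the form $|I_{k,k'}|\lesssim 2^{-\varepsilon|k-k'|}\|w\|_{\mathfrak{L}^{a,p}_{a-par}} \|P_{k}F\|_{L^{2}(w^{-1})} \|P_{k'}G\|_{L^{2}(w^{-1})}$ for some $\varepsilon>0$ dictated by $p$.

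\emph{Step (iii).} To sum over the dyadic pieces while absorbing the weight, I would use the H\"older consequence of the Morrey--Campanato definition,
\[
\frac{1}{|Q(x,r)\times I(t,r^{a})|}\int_{Q(x,r)\times I(t,r^{a})} w \leq C r^{-a}\|w\|_{\mathfrak{L}^{a,p}_{a-par}},
\]
which at the kernel scale $r\sim 2^{-k}$ lets me replace pointwise $w$-values by the $a$-parabolic Hardy--Littlewood maximal function $M_{a}w$, uniformly controlled by $\|w\|_{\mathfrak{L}^{a,p}_{a-par}}$. Feeding this bound into the diagonal and off-diagonal frequency-localized estimates and applying Cauchy--Schwarz with the geometric decay $2^{-\varepsilon|k-k'|}$ closes the full bilinear inequality, thereby proving \eqref{hop_H_par}; the retarded variant \eqref{inho_par} follows by the modification indicated in Step (i).

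The main obstacle is the unit-frequency bilinear estimate in Step (ii): the Bessel asymptotics must be extracted sharply enough that, when paired with $w$ via the H\"older/maximal-function reduction, the real bilinear interpolation closes precisely in the range $a/(a-1)<p\leq(n+a)/a$. The upper bound on $p$ merely encodes the natural scaling $\alpha=a$, but the lower bound $p>a/(a-1)$ is forced by the integrability threshold where the Bessel kernel decay meets the parabolic maximal bound, and hitting this threshold without losing a logarithm or a power of $2^{k}$ is the technical heart of the argument.
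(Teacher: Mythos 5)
Your outline reproduces some correct ingredients (duality to a bilinear form, Bessel asymptotics at unit frequency, bilinear real interpolation), but it has two genuine gaps. First, your summation scheme in Steps (ii)--(iii) does not close. After the frequency decomposition, the cross terms $I_{k,k'}$ with $|k-k'|\geq2$ vanish identically (disjoint Fourier supports), so the claimed off-diagonal decay $2^{-\varepsilon|k-k'|}$ is not the issue; the real issue is that recombining the diagonal pieces requires the square-function estimate $\sum_k\|P_kF\|_{L^2(w^{-1})}^2\lesssim\|F\|_{L^2(w^{-1})}^2$ and its dual, i.e.\ a weighted Littlewood--Paley theorem, which needs $w(\cdot,t)\in A_2(\mathbb{R}^n)$ and fails for a general $w\in\mathfrak{L}^{a,p}_{a-par}$. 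Your averaged H\"older bound and the $a$-parabolic maximal function $M_aw$ do not substitute for this. The paper's resolution is Lemma \ref{lem2_par}: replace $w$ by the pointwise-larger majorant $w_*=(M(w(\cdot,t)^\rho))^{1/\rho}$, which has comparable $\mathfrak{L}^{\alpha,p}_{a-par}$ norm when $\alpha>a/p$, $p>\rho>1$, and lies in $A_2$ uniformly in $t$ by the Coifman--Rochberg property $(Mw)^\delta\in A_1$; only then does Kurtz's weighted Littlewood--Paley theorem apply. Moreover, the decomposition that actually carries the weight is not the frequency one: at unit frequency the paper decomposes $F,G$ into dyadic spatial annuli $|x|\sim2^j,2^k$ and time intervals of length $R=\max(2^j,2^k)$, proves kernel sup-bounds $\|K_{jk}\|_\infty$ from the Bessel expansion and a van der Corput lemma, pairs the resulting $L^1\times L^1$ bounds with $w^p$ over $a$-parabolic boxes via H\"older, and interpolates bilinearly in weighted $\ell^s_\infty$ spaces against the unweighted $L^2\times L^2$ bound; the constraint $p>a/(a-1)$ arises precisely from summing these spatial pieces ($\alpha>1+a/p$ with $\alpha=a$), not from a maximal-function threshold as you suggest.

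Second, your treatment of the retarded estimate \eqref{inho_par} would fail as stated. Absorbing $\chi_{s<t}$ ``directly into the kernel analysis'' destroys the proof of the $L^2\times L^2$ endpoint \eqref{interpolation_value_2}, which rests on Plancherel in $t$ and a $TT^*$ argument, neither of which survives the sharp truncation $\int_{-\infty}^t$. You are right that Christ--Kiselev is unavailable at equal $L^2_t$ exponents, but the paper does use it, off the endpoint: real interpolation between the localized $L^2\times L^2$ and $L^1\times L^1$ estimates yields an $L^{2-\varepsilon}_{x,t}\times L^{2-\varepsilon}_{x,t}$ bound, Christ--Kiselev applies since $(2-\varepsilon)/(1-\varepsilon)>2-\varepsilon$, and the resulting loss $2^{\varepsilon(j+k)}$ is absorbed by the room in the dyadic summation. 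Without this (or an equivalent device), your Step (i) leaves \eqref{inho_par} unproved.
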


\begin{rem}
Here we are assuming $\beta=a$ but this is just needed for the scaling invariance of the estimates  \eqref{hop_H_par} and \eqref{inho_par} under the scaling $(x,t)\rightarrow(\lambda x,\lambda^{a}t)$, $\lambda>0$.
\end{rem}

\begin{rem}\label{rem}
For \eqref{hop_H_par}, we will prove more generally
\begin{equation}\label{hop_H_par2}
\big\|e^{it(-\Delta)^{a/2}}f \big\|_{L^2(w(x,t))}\leq C\|w\|_{\mathfrak{L}^{a+2s,p}_{a-par}}^{1/2}\|f\|_{\dot{H}^s}
\end{equation}
if\, $a>1$, $s>-\frac a2(1-\frac1p)$ and $\max\{\frac{a}{a-1+2s},1\}<p\leq\frac{n+a}{a+2s}$.
Thus we have a smoothing effect with a gain of $\big(\frac a2(1-\frac1p)\big)^-$ in $x$.
When $a=2$, it was shown in \cite{BBCRV} that
\eqref{hop_H_par2} holds for general functions $f\in\dot{H}^s$
if $(s,1/p)$ lies in the triangle with vertices $B,C,D$
and fails if $(s,1/p)$ lies in the triangle with vertices $A,B,F$.
See Figure \ref{figure}.
So it was naturally asked in \cite{BBCRV} whether \eqref{hop_H_par2} with $a=2$ might hold
for the quadrangle with vertices $B,D,E,F$.
With the radial assumption on $f$, we give an affirmative answer to this question
that it can hold on the quadrangle and even on a region off the line $BF$.
This improvement particularly on $s=0$ enables us to apply weighted estimates like \eqref{hop_H_par2}
to the Cauchy problem \eqref{higher} with $L^2$ initial data in a weighted $L^2$ setting.
\end{rem}


\begin{figure}[t!]\label{figure}
\includegraphics[width=8.0cm]{./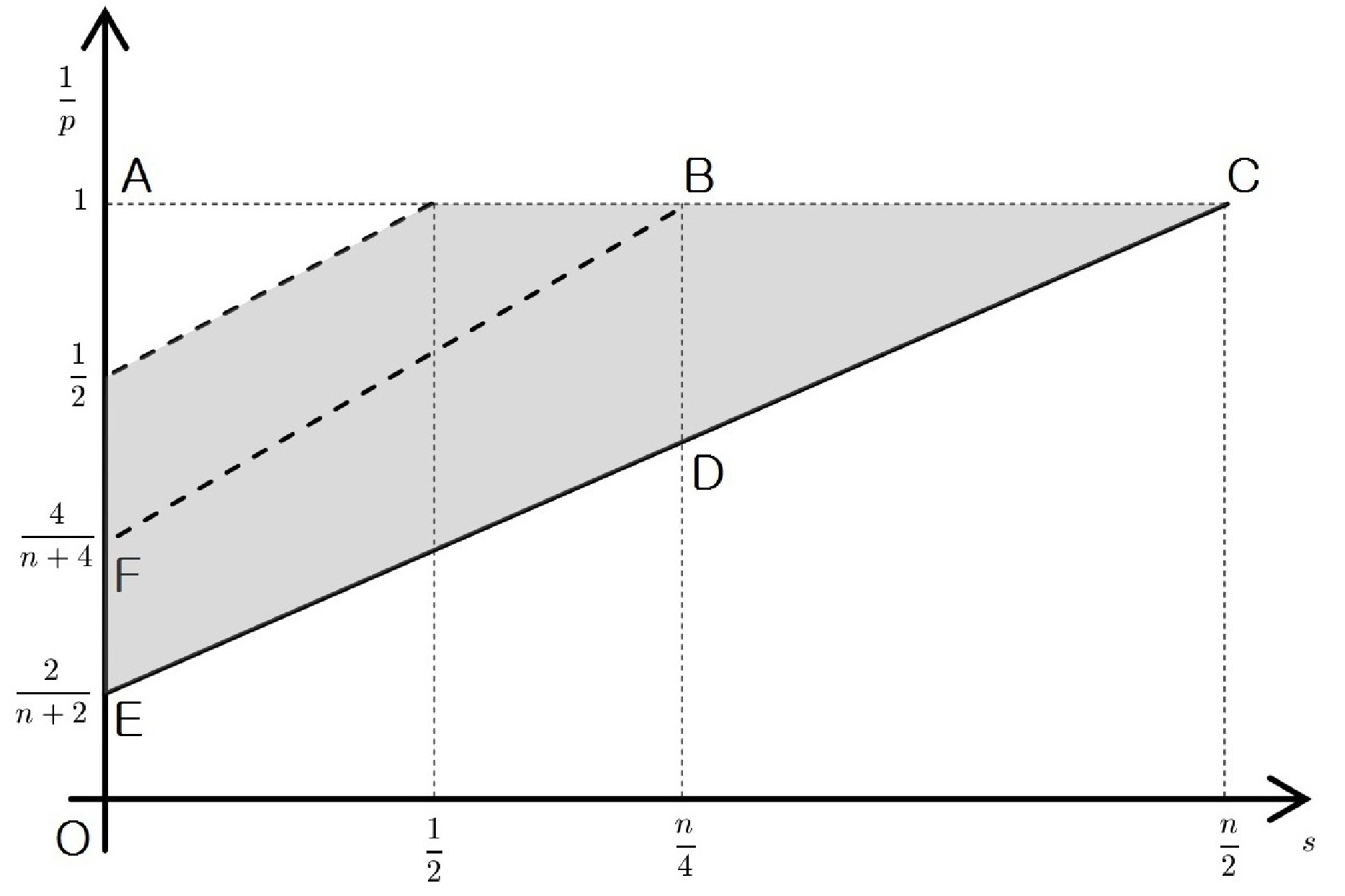}
\caption{The region of $(s,1/p)$ for \eqref{hop_H_par2}
particularly when $a=2$.}
\end{figure}


The estimate \eqref{hop_H_par2} was shown for the wave equation ($a=1$)
(\cite{KoS})
and can be compared with the following smoothing estimates (known for Morawetz estimates)
\begin{equation}\label{mor'}
\big\||x|^{-b/2}e^{itD^a}f\big\|_{L_{t,x}^2}\leq C\|D^{(b-a)/2}f\|_2
\end{equation}
which have been studied by many authors
for the wave equation ($a=1$) \cite{M} and for the Schr\"odinger equation ($a=2$) \cite{KY,Su,V}.
For fractional Schr\"odinger equations, \eqref{mor'} can be found in Theorem 1.10 of \cite{FW}
that \eqref{mor'} holds for $b \in(1,n)$ and $a>0$ if $f$ is a radial function.
As a direct consequence of \eqref{hop_H_par2},
we improve this result to cases where $b>a/p$ and extend \eqref{mor'} to more general time-dependent weights instead of $|x|^{-b}$.
Indeed, by taking $s=(b-a)/2$ in \eqref{hop_H_par2} and using the simple relation
$\|w^b\|_{\mathfrak{L}^{b,p}_{a-par}} =\|w\|_{\mathfrak{L}^{1,pb}_{a-par}}^b$,
we get the following corollary.

\begin{cor}
Let $n\geq2$, $a>1$, $b\in(a/p,n+a)$ and $f$ be a radial function. Then we have
\begin{equation}\label{coo}
\big\||w(x,t)|^{b/2}e^{itD^a}f\big\|_{L_{t,x}^2}
\leq C\|D^{(b-a)/2}f\|_2
\end{equation}
if $w\in\mathfrak{L}^{1,pb}_{a-par}(\mathbb{R}^{n+1})$ for $\max\{a/(b-1),1\}<p\leq (n+a)/b$.
\end{cor}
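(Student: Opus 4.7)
The plan is to read off the corollary directly from the homogeneous estimate \eqref{hop_H_par2} stated in Remark \ref{rem}, after matching parameters and rewriting the weight. First I would set $s=(b-a)/2$, which is nonnegative precisely because $b\ge a$. Under this choice one computes $a+2s=b$ and $a-1+2s=b-1$, so the admissible range in \eqref{hop_H_par2},
$$\max\Bigl\{\tfrac{a}{a-1+2s},\,1\Bigr\}<p\le \tfrac{n+a}{a+2s},$$
becomes exactly $\max\{a/(b-1),1\}<p\le (n+a)/b$, which is the assumption of the corollary.

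Next I would rewrite the left-hand side of \eqref{coo} as a weighted $L^2$-norm of the propagator: by squaring,
$$\bigl\||w(x,t)|^{b/2}e^{itD^a}f\bigr\|_{L^2_{t,x}}^{2}=\bigl\|e^{itD^a}f\bigr\|_{L^2(|w|^b)}^{2},$$
so the natural weight to plug into \eqref{hop_H_par2} is $W:=|w|^b$. Applying \eqref{hop_H_par2} with this weight and with $s=(b-a)/2$ yields
$$\bigl\|e^{itD^a}f\bigr\|_{L^2(|w|^b)}\le C\,\bigl\||w|^b\bigr\|_{\mathfrak{L}^{b,p}_{a-par}}^{1/2}\,\|f\|_{\dot{H}^{(b-a)/2}},$$
and the right-hand factor is the desired $\|D^{(b-a)/2}f\|_2$.

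To conclude I would verify the scaling relation $\||w|^b\|_{\mathfrak{L}^{b,p}_{a-par}}=\|w\|_{\mathfrak{L}^{1,pb}_{a-par}}^{b}$. This is immediate from the definition: writing the supremum that defines the left-hand side,
$$\bigl\||w|^b\bigr\|_{\mathfrak{L}^{b,p}_{a-par}}=\sup_{(x,t),r}r^{b}\Bigl(\tfrac{1}{r^{n+a}}\!\int_{Q(x,r)\times I(t,r^a)}\!|w|^{bp}\,dy\,ds\Bigr)^{1/p},$$
and recognizing the inner integral raised to the power $1/(bp)$ as the quantity defining $\|w\|_{\mathfrak{L}^{1,pb}_{a-par}}$ (with exponent index $\alpha=1$ and integrability $pb$), the identity follows by a one-line manipulation of the exponent. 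The finiteness hypothesis $w\in\mathfrak{L}^{1,pb}_{a-par}$ then absorbs into the constant $C$, completing the derivation.

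The only genuine check in this argument is the norm identity in the last step; there is no analytic obstacle, since the bulk of the work has already been done in Theorem \ref{thm_par} and its Remark \ref{rem}. The corollary is therefore essentially a repackaging of \eqref{hop_H_par2} specialised to the Morawetz-type weight $|w|^{b}$, with the endpoint $b=n$ allowed (and indeed any $b<n+a$), which is precisely the improvement over the $|x|^{-b}$ case in \cite{FW}.
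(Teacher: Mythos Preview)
Your proof is correct and follows exactly the paper's approach: set $s=(b-a)/2$ in \eqref{hop_H_par2}, apply it with the weight $|w|^b$, and invoke the norm identity $\||w|^b\|_{\mathfrak{L}^{b,p}_{a-par}}=\|w\|_{\mathfrak{L}^{1,pb}_{a-par}}^{b}$. The paper states this derivation in a single sentence; you have simply written it out in full detail, including the parameter-range check.
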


\begin{rem}
Since $|x|^{-\alpha} |t|^{-\beta} \in\mathfrak{L}^{1,pb}_{a-par}(\mathbb{R}^{n+1})$ for $\alpha+\beta=1$ and $\alpha,\beta\geq0$,
\eqref{coo} covers particularly \eqref{mor'}.
\end{rem}

\subsection{Applications}
Now we present a few applications of our estimates to the well-posedness theory
for the following Cauchy problem in the radial case:
    \begin{equation}\label{higher}
    \begin{cases}
    i\partial_tu+(-\Delta)^{a/2}u+V(x,t)u=F(x,t),\quad a>1,\\
    u(x,0)=u_0(x),
    \end{cases}
    \end{equation}
where we assume that $u,u_0,V$ and $F$ are
radial functions with respect to the spatial variable $x$.
Making use of Theorem \ref{thm_par}, we obtain here that \eqref{higher} is globally well-posed
in the space $L^2(|V|dxdt)$ with potentials $V\in\mathfrak{L}^{a,p}_{a-par}$.
More precisely, we have the following result.

\begin{thm}\label{thm0}
Let $n\geq2$, $a >1$ and $a/(a-1)<p\leq(n+a)/a$.
Assume that $V\in\mathfrak{L}^{a,p}_{a-par}$ with
$\|V\|_{\mathfrak{L}^{a,p}_{a-par}}$ small enough,
and that $u,u_0,V$ and $F$ are
radial functions with respect to the spatial variable $x$.
Then, if $u_0\in L^2$ and $F\in L^2(|V|^{-1})$, there exists a unique solution of the problem \eqref{higher}
in the space $L^2(|V|)$. Furthermore, the solution $u$ belongs to $C_tL_x^2$ and satisfies the following inequalities:
\begin{equation}\label{1-1}
\|u\|_{L^2(|V|)}\leq C\|V\|_{\mathfrak{L}^{a,p}_{a-par}}^{1/2}\|u_0\|_{L^2}+C\|V\|_{\mathfrak{L}^{a,p}_{a-par}}\|F\|_{L_{t,x}^2(|V|^{-1})}
\end{equation}
and
\begin{equation}\label{1-2}
\sup_{t\in\mathbb{R}}\|u\|_{L_x^2}\leq C\|u_0\|_{L^2}+C\|V\|_{\mathfrak{L}^{a,p}_{a-par}}^{1/2}\|F\|_{L_{t,x}^2(|V|^{-1})}.
\end{equation}
\end{thm}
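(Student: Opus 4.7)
The plan is to set up the Cauchy problem \eqref{higher} as a fixed point problem on the Banach space $X=L^2(|V|dxdt)$ and use Theorem \ref{thm_par} twice, for the free part and the inhomogeneity, to close a contraction. Since $u_0$, $V$ and $F$ are radial in the spatial variable, and both the free propagator $e^{it(-\Delta)^{a/2}}$ (whose symbol $e^{it|\xi|^a}$ is radial in $\xi$) and multiplication by $V$ preserve spatial radiality, the natural iteration map
\[
T(u)(\cdot,t)=e^{it(-\Delta)^{a/2}}u_0-i\int_0^t e^{i(t-s)(-\Delta)^{a/2}}\bigl(F(\cdot,s)-V(\cdot,s)u(\cdot,s)\bigr)ds
\]
sends the (closed) subspace $X_{\mathrm{rad}}\subset X$ of spatially radial functions into itself, so Theorem \ref{thm_par} applies at every iterate.

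The main observation is the tautology
\[
\|Vu\|_{L^2(|V|^{-1})}^{2}=\int |V|^{2}|u|^{2}|V|^{-1}dxdt=\|u\|_{L^2(|V|)}^{2},
\]
which converts the $L^2(|V|^{-1})$ input required in \eqref{inho_par} into the $X$–norm of $u$. Combining \eqref{hop_H_par} with weight $w=|V|\in\mathfrak{L}^{a,p}_{a-par}$ (whose norm equals $\|V\|_{\mathfrak{L}^{a,p}_{a-par}}$) for the homogeneous piece and \eqref{inho_par} for the two Duhamel pieces then gives
\[
\|T(u)\|_{X}\le C\|V\|_{\mathfrak{L}^{a,p}_{a-par}}^{1/2}\|u_0\|_{L^2}+C\|V\|_{\mathfrak{L}^{a,p}_{a-par}}\bigl(\|F\|_{L^2(|V|^{-1})}+\|u\|_{X}\bigr),
\]
and analogously $\|T(u_1)-T(u_2)\|_X\le C\|V\|_{\mathfrak{L}^{a,p}_{a-par}}\|u_1-u_2\|_X$. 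Under the smallness hypothesis $C\|V\|_{\mathfrak{L}^{a,p}_{a-par}}<1$, $T$ is a contraction on a suitable closed ball in $X_{\mathrm{rad}}$, producing a unique fixed point which satisfies \eqref{1-1} after absorbing the $u$–term on the right into the left.

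For the $L^\infty_t L^2_x$ bound \eqref{1-2}, I will dualize \eqref{hop_H_par}: with the pairing $\langle G,g\rangle=\int G\bar g\,dxdt$ between $L^2(w^{-1})$ and $L^2(w)$, the adjoint of $f\mapsto e^{it(-\Delta)^{a/2}}f$ gives
\[
\Bigl\|\int_{\mathbb{R}}e^{-is(-\Delta)^{a/2}}G(\cdot,s)\,ds\Bigr\|_{L^2}\le C\|w\|_{\mathfrak{L}^{a,p}_{a-par}}^{1/2}\|G\|_{L^2(w^{-1})}.
\]
Applying this with $w=|V|$, $G=\chi_{[0,t]}(s)(F-Vu)$, and using the unitarity of $e^{it(-\Delta)^{a/2}}$ on $L^2_x$ yields
\[
\|u(\cdot,t)\|_{L^2_x}\le\|u_0\|_{L^2}+C\|V\|_{\mathfrak{L}^{a,p}_{a-par}}^{1/2}\bigl(\|F\|_{L^2(|V|^{-1})}+\|u\|_{X}\bigr),
\]
which, combined with the already-established \eqref{1-1}, delivers \eqref{1-2} uniformly in $t$. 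Continuity $u\in C_tL^2_x$ will be obtained by approximating $F-Vu$ in $L^2(|V|^{-1})$ by smooth, compactly supported functions (for which the Duhamel integral is trivially $C_tL^2_x$) and transferring the limit via the just-proved uniform $L^\infty_tL^2_x$ estimate.

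The main subtlety is not the Picard iteration itself but the bookkeeping of the dual estimate in \eqref{1-2}: one must be careful that the truncation $\chi_{[0,t]}$ does not destroy the radial structure (it is purely in $s$, so it does not) and that the weight $|V|$ inherits the class membership needed to run duality. Everything else is a clean application of Theorem \ref{thm_par} together with the identity $\|Vu\|_{L^2(|V|^{-1})}=\|u\|_{L^2(|V|)}$.
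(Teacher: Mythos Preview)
Your proposal is correct and follows essentially the same approach as the paper: both set up the Duhamel formulation, exploit the identity $\|Vu\|_{L^2(|V|^{-1})}=\|u\|_{L^2(|V|)}$ together with \eqref{inho_par} to obtain a contraction/Neumann-series inversion in $L^2(|V|)$, and then derive \eqref{1-2} from the dual of \eqref{hop_H_par} combined with the $L^2_x$-unitarity of $e^{it(-\Delta)^{a/2}}$. The only cosmetic difference is that the paper phrases the fixed-point step as inverting $I-\Phi$ via $\|\Phi\|<1$, whereas you phrase it as a contraction mapping on a ball; these are equivalent.
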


The well-posedness for linear Schr\"odinger equations with potentials has been studied by many authors (see \cite{RV,RV2,D'APV,NS,BBRV,S3,KoS2,CKS}).
In the context of the weighted $L^2$ setting, it has been studied
in \cite{RV2,V,BBRV,S3} essentially for time-independent potentials $V(x)$ contained in various classes like Morrey-Campanato classes.
The first result on time-dependent potentials was obtained in our previous work \cite{KoS2} where
we prove Theorem \ref{thm0} for higher orders $a>(n+2)/2$ and a more restrictive potential class $\mathfrak{L}^{\alpha,\beta,p}$
with $a=\alpha+a\beta$.
The contribution of Theorem \ref{thm0} is that we allow general orders $a>1$ and  $a$-parabolic Morrey-Campanato potential classes
which are the most natural Morrey-Campanato type classes adapted to scaling structure of Schr\"odinger equations.
See also \cite{KoS} for a related result where we consider the wave equation corresponding to the case $a=1$.

\subsection{Main ideas}
We end this section with an outline of the main ideas and the organization of this paper.

The known approach to weighted $L^2$ Strichartz estimates with time-independent weights is based on weighted $L^2$ bounds for resolvent of the Laplacian and for Fourier restriction (see, for example, \cite{RV2,V,BBRV,S3}),
but it is no longer available in the case of general time-dependent weights.
Our method that works for the time-dependent case is entirely different from them
and is based on a combination of the usual $TT^*$ argument, bilinear interpolation, and frequency as well as spatial localization
based on a property of maximal functions of weights and asymptotic expansion of Bessel functions.
This is done in several steps:

\smallskip

\noindent\textit{Frequency localization on weighted spaces and maximal functions of weights.}
To show \eqref{hop_H_par2} as well as the inhomogeneous estimate, the first step is to work on spatial Fourier transform side by making use of the Littlewood-Paley theorem on weighted $L^2$ spaces
with Muckenhoupt $A_2$ weights in the spatial variable.
A key observation in this step is to remove the $A_2$ assumption $w(\cdot,t)\in A_2(\mathbb{R}^n)$ when applying the theorem
by using a useful property (Lemma \ref{lem2_par}) of $n$-dimensional maximal functions
$w_*(x,t)=(M(w(\cdot,t)^q)(x))^{1/q}$ of weights in $a$-parabolic Morrey-Campanato classes.
Here, $M(f)$ is the usual Hardy-Littlewood maximal function of $f$.
This property, which is the main part of Section \ref{sec2}, says that
$\|w_*\|_{\mathfrak{L}^{\beta,p}_{a-par}}\leq C\|w\|_{\mathfrak{L}^{\beta,p}_{a-par}}$ if $\beta>a/p$ and $p>q$,
and $w_\ast(\cdot,t)\in A_2(\mathbb{R}^n)$ uniformly in $t\in\mathbb{R}$.
Now, since $w\leq w_\ast$ and $\|w_*\|_{\mathfrak{L}^{\beta,p}_{a-par}}\leq C\|w\|_{\mathfrak{L}^{\beta,p}_{a-par}}$,
it suffices to show \eqref{hop_H_par2} replacing $w$ with $w_\ast$.
So we may assume $w(\cdot,t)\in A_2(\mathbb{R}^n)$ for simplicity and can now apply the Littlewood-Paley theorem to \eqref{hop_H_par2}
without assuming the $A_2$ condition on the weight.
This finally leads us to estimating a number of frequency-localized pieces like
    \begin{equation}\label{freq_par00}
    \big\|e^{it(-\Delta)^{a/2}} P_k f \big\|_{L^2(w)}\leq C2^{k(\beta -a)/2}
    \|w\|_{\mathfrak{L}^{\beta,p}_{a-par}}^{1/2}\|f\|_{L^2}
    \end{equation}
in Proposition \ref{prop_par}, where $\beta>1+a/p$, $a>1$, $1<p\leq(n+a)/\beta$ and $P_k$ is the Littlewood-Paley projection.
This approach allows us to take advantage of localization in Fourier transform side which is a basic strategy in our argument.
See Section \ref{sec3} for details.

\smallskip

\noindent\textit{$TT^*$ argument and asymptotic expansion of Bessel functions.}
The next step is devoted to proving frequency-localized estimates like \eqref{freq_par00}
whose proof is based on a combination of the usual $TT^*$ argument, spatial localization argument based on asymptotic expansion (Lemma \ref{bessel})
of Bessel functions, and bilinear interpolation (Lemmas \ref{inter} and \ref{id}).

We shall give here a brief description of this step. See Section \ref{sec4} for details.
Notice first that we only need to show the case $k=0$ in \eqref{freq_par00} by the scaling $(x,t)\rightarrow(\lambda x,\lambda^{a}t)$.
Then, by using the usual $TT^*$ argument we are reduced to showing the bilinear form estimate \eqref{bfs00},
    \begin{equation*}
    \bigg|\bigg\langle\int_{\mathbb{R}} \Big( e^{i(t-s)(-\Delta)^{a/2}}P_0^2F(\cdot,s) \Big)(x) ds,G(x,t)\bigg\rangle\bigg|
    \leq C\|w\|_{\mathfrak{L}^{\beta,p}_{a-par}}\|F\|_{L^2(w^{-1})}\|G\|_{L^2(w^{-1})}.
    \end{equation*}
Of course, $F$ and $G$ are assumed here to be radial with respect to the space variable $x$.
By decomposing dyadically the involved functions $F,G$ into spatially localized pieces $F_j,G_k$,
we are reduced to estimating a number of spatially localized pieces like
\begin{align}\label{bilin_par00}
\nonumber\bigg|\bigg\langle\int_{\mathbb{R}} \Big( e^{i(t-s)(-\Delta)^{a/2}}P_0^2F_j(\cdot,s) \Big)(x) ds, &G_k(x,t)\bigg\rangle\bigg|\\
\leq C2^{C(j,k,\beta,p,a)}&\|w\|_{\mathfrak{L}^{\beta,p}_{a-par}}\|F\|_{L^2(w^{-1})}\|G\|_{L^2(w^{-1})}
\end{align}
with suitable constants $C(j,k,\beta,p,a)$.
Then using the Fourier transform of spherical surface measure \eqref{spheri},
the integral in the right-hand side of \eqref{bilin_par00} can be written in terms of Bessel functions $J_{\frac{n-2}{2}}$,
as follows (see \eqref{0907}):
    \begin{align}\label{22est00}
    &\int_{\mathbb{R}} \Big(e^{i(t-s) (-\Delta)^{a/2}} P_0^2 F_j(\cdot,s)\Big)(x) ds \\
    \nonumber=&r^{-\frac{n-2}{2}} \int_{\mathbb{R}}\int_{0}^\infty \lambda^{-\frac{n-2}{2}}
        \bigg( \int_{0}^\infty e^{i(t-s)\rho^a} \overline{J_{\frac{n-2}{2}} (r\rho)} J_{\frac{n-2}{2}} (\lambda\rho) \rho \psi(\rho)^2 d\rho \bigg)
        \lambda^{n-1}\widetilde{F_j}(\lambda,s) d\lambda ds.
    \end{align}
Note here that $\widetilde{F_j}(\lambda,s):=F_j(\lambda y',s)$ is independent of $y'\in S^{n-1}$
since $F_j$ is a radial function in the $x$ variable.
The radial assumption comes into play at this step.

As mentioned above, the Morawetz estimates \eqref{mor'} and the classical $L^q-L^r$ Strichartz estimates
have been studied under the radial assumptions on Cauchy data in the fractional case $1<a<2$,
and its method is based on asymptotic expansion of Bessel functions similar as in our case
(see \cite{Sh,Ke,GW,CL,CKS} and references therein).
Up to now, such an approach seems quite general when handling the fractional case.
We think that this is because such estimates are due to the dispersive nature of the equation,
but the dispersion in the fractional case seems not to be strong enough to have the estimates under general data.
This naturally leads us to consider the possibility of having the estimates under radial data.
Following this approach, we assume radial symmetry on the data, although a big picture in our argument is not restricted to
the case having the symmetry.
Of course, there is still a possibility in higher orders $a\geq2$ that we have such weighted estimates for general data,
as shown in our previous work \cite{KoS2}.

\smallskip

\noindent\textit{Bilinear interpolation .}
Using \eqref{22est00} and the asymptotic expansion of Bessel functions, we get
\eqref{bilin_par00} by dividing cases into $j,k\geq0$, $|j-k|\leq1$ and $|j-k|>1$.
See \eqref{inter_1}, \eqref{inter_3'} and \eqref{inter_3}, respectively,
which are proved through Subsubsection \ref{sec4.1.1} and Subsection \ref{sec4.2}.
To sum \eqref{bilin_par00} over $j,k\geq0$ in Subsection \ref{sec4.1},
we finally interpolate these three cases using the bilinear interpolation lemmas \ref{inter} and \ref{id}.
The corresponding inhomogeneous part is similarly handled in Subsection \ref{sec4.3}.

\medskip

In the final section, Section \ref{sec5}, we make use of our weighted $L^2$ Strichartz estimates
to obtain the global well-posedness result, Theorem \ref{thm0}.

\

Throughout this paper, we will use the letter $C$ to denote positive constants
which may be different at each occurrence.
We also denote $A\lesssim B$ and $A\sim B$ to mean $A\leq CB$ and $CB\leq A\leq CB$, respectively,
with unspecified constants $C>0$.


\section{Preliminary lemmas}\label{sec2}

In this section we present preliminary lemmas which will be used in later sections
for the proof of Theorem \ref{thm_par}.

Let us first recall that a weight\footnote{\,It is a locally integrable function
which is allowed to be zero or infinite only on a set of Lebesgue measure zero.}
$w:\mathbb{R}^n\rightarrow[0,\infty]$
is said to be in the Muckenhoupt $A_2(\mathbb{R}^n)$ class if there is a constant $C_{A_2}$ such that
\begin{equation*}
\sup_{Q\text{ cubes in }\mathbb{R}^{n}}
\bigg(\frac1{|Q|}\int_{Q}w(x)dx\bigg)\bigg(\frac1{|Q|}\int_{Q}w(x)^{-1}dx\bigg)<C_{A_2}.
\end{equation*}
(See, for example, \cite{G2}.)
We also say that $w$ is in the class $A_1$ if there is a constant $C_{A_1}$
such that for almost every $x$
\begin{equation*}
M(w)(x)\leq C_{A_1}w(x),
\end{equation*}
where $M(w)$ is the Hardy-Littlewood maximal function of $w$ defined by
\begin{equation}\label{maxi}
M(w)(x)=\sup_{Q}\frac1{|Q|}\int_{Q}w(y)dy.
\end{equation}
Here, the sup is taken over all cubes $Q$ in $\mathbb{R}^{n}$ with center $x$.
Then,
\begin{equation}\label{bac}
A_1\subset A_2\quad\text{with}\quad C_{A_2}\leq C_{A_1}.
\end{equation}
(See \cite{G2} for details.)
In the following lemma, we give a useful property of weights in $a$-parabolic Morrey-Campanato classes
regarding the maximal function $w_*(x,t)=(M(w(\cdot,t)^q)(x))^{1/q}$.
Similar properties for Morrey-Campanato type classes can be also found in \cite{CS,KoS,KoS2}.
Such property has been studied earlier in \cite{CS,S2,S3} concerning unique continuation for Schr\"odinger equations.

\begin{lem}\label{lem2_par}
Let $w\in\mathfrak{L}^{\beta,p}_{a-par}$ be a weight on $\mathbb{R}^{n+1}$
and $w_*(x,t)$ be the $n$-dimensional maximal function defined by
$$w_*(x,t)=\sup_{Q'}\Big(\frac{1}{|Q'|}\int_{Q'}w(y,t)^q dy\Big)^{\frac{1}{q}},\quad q>1,$$
where $Q'$ denotes a cube in $\mathbb{R}^n$ with center $x$.
Then, if $\beta>a/p$ and $p>q$, we have
$\|w_*\|_{\mathfrak{L}^{\beta,p}_{a-par}}\leq C\|w\|_{\mathfrak{L}^{\beta,p}_{a-par}}$,
and $w_\ast(\cdot,t)\in A_2(\mathbb{R}^n)$ in the $x$ variable with a constant $C_{A_2}$ uniform in almost every $t\in\mathbb{R}$.
\end{lem}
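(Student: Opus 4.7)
The lemma has two assertions which I will handle separately. For the Morrey--Campanato bound on $w_*$, fix a parabolic box $Q(x,r)\times I(t,r^a)$ and write $w_*(y,s)=(M_x w^\rho)^{1/\rho}(y,s)$, where $M_x$ is the Hardy--Littlewood maximal operator acting only in the spatial variable. Setting $Q_k:=Q(x,3\cdot 2^k r)$ for $k\geq 0$, I decompose
\[
w^\rho(\cdot,s)=w^\rho\chi_{Q_0}+\sum_{k\geq 0}w^\rho\chi_{Q_{k+1}\setminus Q_k}.
\]
Subadditivity of $M_x$ and the inequality $(A+B)^{1/\rho}\leq A^{1/\rho}+B^{1/\rho}$ reduce everything to estimating a local piece and the dyadic annular tails.

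For the local piece, the assumption $p>\rho$ is exactly what lets me invoke the $L^{p/\rho}$-boundedness of $M_x$. For each fixed $s$ this yields $\|(M_x(w^\rho\chi_{Q_0}))^{1/\rho}(\cdot,s)\|_{L^p(\mathbb{R}^n)}\lesssim \|w(\cdot,s)\|_{L^p(Q_0)}$; integrating in $s\in I(t,r^a)\subset I(t,(3r)^a)$ and applying the Morrey--Campanato norm at scale $3r$ gives the required contribution after normalizing by $r^{\alpha-(n+a)/p}$.

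For the $k$-th annulus, the key geometric observation is that any cube $Q'$ containing $y\in Q(x,r)$ and meeting $Q_{k+1}\setminus Q_k$ must have side length $\gtrsim 2^k r$, so $|Q'|\gtrsim |Q_{k+1}|$. Hence
\[
(M_x(w^\rho\chi_{Q_{k+1}\setminus Q_k}))^{1/\rho}(y,s)\lesssim \Bigl(\tfrac{1}{|Q_{k+1}|}\int_{Q_{k+1}} w(z,s)^\rho\,dz\Bigr)^{1/\rho}\leq \Bigl(\tfrac{1}{|Q_{k+1}|}\int_{Q_{k+1}} w(z,s)^p\,dz\Bigr)^{1/p},
\]
where Jensen (valid since $p\geq\rho$) is used in the last step. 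This bound is independent of $y$, so integrating over $Q(x,r)\times I(t,r^a)\subset Q_{k+1}\times I(t,(3\cdot 2^k r)^a)$ and invoking the Morrey--Campanato norm at scale $2^k r$ produces, after the $r^{\alpha-(n+a)/p}$ normalization, a factor $\lesssim (2^k)^{a/p-\alpha}\|w\|_{\mathfrak{L}^{\alpha,p}_{a-par}}$ for the $k$-th term. The hypothesis $\alpha>a/p$ ensures precisely that the geometric series $\sum_k 2^{k(a/p-\alpha)}$ converges, and Minkowski summation in $k$ closes the Morrey--Campanato bound.

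The $A_2$ assertion is then immediate from a classical fact. Fix $s$ and set $g:=w(\cdot,s)^\rho$, so that $w_*(\cdot,s)=(M_x g)^{1/\rho}$. By the Coifman--Rochberg theorem, $(M_x g)^\delta$ is an $A_1(\mathbb{R}^n)$ weight for every $\delta\in(0,1)$ with an $A_1$ constant depending only on $n$ and $\delta$; taking $\delta=1/\rho<1$ (which is where $\rho>1$ enters) gives $w_*(\cdot,s)\in A_1\subset A_2$ uniformly in $s$. The main obstacle is the bookkeeping in the annular estimate that matches the $(r,r^a)$ anisotropic scaling of the class and converts the condition $\alpha>a/p$ into convergence of a geometric series in $k$; once this scaling is tracked carefully, both conclusions follow from standard maximal-function theory.
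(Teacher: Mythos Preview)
Your proof is correct and follows essentially the same route as the paper's own argument: a dyadic spatial-annulus decomposition, the $L^{p/\rho}$-boundedness of the spatial maximal operator for the local piece, a pointwise bound plus H\"older/Jensen on each annulus yielding the geometric factor $2^{k(a/p-\alpha)}$, and the Coifman--Rochberg theorem for the $A_2$ conclusion. The only differences are cosmetic (your annuli are $Q_{k+1}\setminus Q_k$ with $Q_k=Q(x,3\cdot 2^kr)$, whereas the paper uses $Q(z,2^{k+1}r)\setminus Q(z,2^kr)$ and packages the time interval into the pieces $R_k$), and do not affect the logic.
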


\begin{proof}
We first show that $\|w_*\|_{\mathfrak{L}^{\beta,p}_{a-par}}\leq C\|w\|_{\mathfrak{L}^{\beta,p}_{a-par}}$.
Fix a cube $Q(z,r)\times I(\tau,r^a)$ in $\mathbb{R}^{n+1}$.
Here, $Q(z,r)$ denotes a cube in $\mathbb{R}^n$ centered at $z$ with side length $r$,
and $I(\tau,r^a)$ denotes an interval in $\mathbb{R}$ centered at $\tau$ with length $r^a$.
Then, we define the rectangles $R_k$, $k\geq1$, such that $(y,t)\in R_k$ if $|t-\tau|<2r^a$
and $y\in Q(z,2^{k+1}r)\setminus Q(z,2^kr)$,
and set $R_0=Q(z,2r)\times I(\tau,4r^a)$.

Now one may write
$$w(y,t)=\sum_{k\geq 0}w^{(k)}(y,t)+\phi(y,t),$$
where $w^{(k)}=w\chi_{R_k}$ with the characteristic function $\chi_{R_k}$ of the set $R_k$,
and $\phi(y,t)$ is a function supported on $\mathbb{R}^{n+1}\setminus\bigcup_{k\geq0}R_k$.
Also it is easy to see that
$$w_*(x,t)\leq\sum_{k \geq 0}\big(w^{(k)}\big)_*(x,t)+\phi_*(x,t)$$
and by Minkowski's inequality
\begin{align}\label{lab}
\nonumber\Big(\int_{Q(z,r)\times I(\tau,r^a)}w_*(x,t)^pdxdt\Big)^{\frac{1}{p}}
&\leq\sum_{k \geq 0}\Big(\int_{Q(z,r)\times I(\tau,r^a)} \big(w^{(k)}\big)_*(x,t)^p dxdt \Big)^{\frac{1}{p}}\\
&+\Big( \int_{Q(z,r)\times I(\tau,r^a)}\phi_*(x,t)^pdxdt\Big)^{\frac{1}{p}}.
\end{align}
Since $\phi_*(x,t)$ is a maximal function of $\phi(y,t)$ with respect to the spatial variable $y$,
and $\phi(y,t)=0$ if $|t-\tau|<2r^a$ (from the support of $\phi$),
we see that $\phi_*(x,t)=0$ if $(x,t)\in Q(z,r) \times I(\tau,r^a)$.
Hence we may consider only the first part in the right-hand side of \eqref{lab}.

For the term where $k=0$, we use the following well-known maximal theorem
\begin{equation}\label{maxi0}
\|M(f)\|_s\leq C\|f\|_s,\quad s>1,
\end{equation}
where $M(f)$ is the Hardy-Littlewood maximal function defined as in \eqref{maxi}.
Indeed, by applying \eqref{maxi0} with $s=p/q$ in $x$-variable,
we see that if $p>q$
\begin{align}\label{sso}
\nonumber r^\beta \Big( \frac{1}{r^{n+a}} \int_{Q(z,r) \times I(\tau,r^a)}&\big(w^{(0)}\big)_* (x,t)^p dxdt \Big)^{\frac{1}{p}}\\
\nonumber\leq &C r^\beta \Big( \frac{1}{r^{n+a}} \int_{Q(z,2r) \times I(\tau,4r^a)} w (y,t)^p dydt \Big)^{\frac{1}{p}}\\
\leq &C\|w\|_{\mathfrak{L}^{\beta,p}_{a-par}}.
\end{align}

Now we only need to consider the terms where $k\geq1$.
Let $k \geq 1$.
Since $(x,t)\in Q(z,r) \times I(\tau,r^a)$, it follows that
    \begin{align*}
   \big(w^{(k)}\big)_*(x,t)
    &= \sup_{Q' \subset \mathbb{R}^n}\bigg(\frac{1}{|Q'|}\int_{Q'} w(y,t)^q\chi_{R_k}(y,t)dy\bigg)^{\frac{1}{q}}\\
    &\leq C\bigg(\frac{1}{(2^k r)^n}\int_{Q(z,2^{k+1} r)\setminus Q(z, 2^k r)} w(y,t)^q dy \bigg)^{\frac{1}{q}}\\
    &\leq C\bigg(\frac{1}{(2^k r)^n}
    \int_{Q(z,2^{k+1}r)\setminus Q(z, 2^k r)} w(y,t)^pdy\bigg)^{\frac{1}{p}},
    \end{align*}
where, for the first inequality  we used the fact that $w(y,t)^q\chi_{R_k}\neq0$ only if
$y\in Q'$ such that $|Q'|\geq(2^kr)^n-r^n\geq\frac12(2^kr)^n$,
and for the last inequality we used H\"older's inequality since $p\geq q$.
Hence,
    \begin{align*}
    \int_{Q(z,r) \times I(\tau,r^a)}&\big(w^{(k)}\big)_*(x,t)^pdxdt \\
    \leq &\frac{C}{(2^kr)^n}\int_{|\tau-t|< r^a}
    \int_{Q(z,2^{k+1}r)\setminus Q(z, 2^k r)}w(y,t)^p\int_{|z-x|<r}1\,dxdydt\\
    \leq&\frac{C}{2^{kn}}\int_{R_k}w(y,t)^pdydt.
    \end{align*}
Since $R_k\subset Q(z,2^{k+1}r)\times I(\tau,2r^a)$, this implies that
    \begin{align*}
    r^\beta \Big( \frac{1}{r^{n+a}}& \int_{Q(z,r) \times I(\tau,r^a)} \big(w^{(k)}\big)_* (x,t)^p dxdt \Big)^{\frac{1}{p}} \\
    &\leq C r^\beta \Big( \frac{1}{2^{kn} r^{n+a}}\int_{R_k} w(y,t)^pdydt \Big)^{\frac{1}{p}} \\
    &\leq C 2^{-\beta k} (2^k r)^\beta \Big( \frac{1}{2^{-ka} (2^{k}r)^{n+a}} \int_{Q(z,2^{k+1}r)\times I(\tau,2r^a)} w(y,t)^p dy dt \Big)^{\frac{1}{p}} \\
    &\leq C 2^{-\beta k} (2^k r)^\beta \Big( \frac{1}{2^{-ka} (2^{k}r)^{n+a}} \int_{Q(z,2^{k+1}r)\times I(\tau,(2^{k+1}r)^a)} w(y,t)^p dy dt \Big)^{\frac{1}{p}} \\
    &\leq C 2^{-\beta k + \frac{ak}{p}} \|w\|_{\mathfrak{L}^{\beta,p}_{a-par}}.
    \end{align*}
Hence, since $\beta>a/p$ and $p\geq q$, it follows that
    $$\sum_{k \geq 1}r^\beta \Big( \frac{1}{r^{n+a}} \int_{Q(z,r) \times Q(\tau,r^a)} \big(w^{(k)}\big)_*(x,t)^pdxdt\Big)^{\frac{1}{p}}\leq C\|w\|_{\mathfrak{L}^{\beta,p}_{a-par}}.$$
By combining this and \eqref{sso}, we get
$\|w_*\|_{\mathfrak{L}^{\beta,p}_{a-par}}\leq C\|w\|_{\mathfrak{L}^{\beta,p}_{a-par}}$
if $\beta>a/p$ and $p>q$.

\medskip

It remains to show that $w_\ast(\cdot,t)\in A_2(\mathbb{R}^n)$.
For this, we will make use of the following fact that can be found in Chapter 5 of \cite{St}
(see also Proposition 2 in \cite{CR}):
If $M(w)(x)<\infty$ for almost every $x\in\mathbb{R}^n$, then for every $\delta\in(0,1)$
\begin{equation}\label{max}
(M(w))^\delta\in A_1
\end{equation}
with $C_{A_1}$ independent of $w$.
Now we are ready to show that
$w_\ast(\cdot,t)\in A_2(\mathbb{R}^n)$ in the $x$ variable with a constant $C_{A_2}$ uniform in almost every $t\in\mathbb{R}$.
Note first that
$$w_\ast(x,t)=(M(w(\cdot,t)^q))^{1/q}.$$
Since $w\in\mathfrak{L}_{a-par}^{\beta,p}$ and $p\geq q$, it is not difficult to see that
$M(w(\cdot,t)^q)<\infty$ for almost every $x\in\mathbb{R}^n$.
Then, by applying \eqref{max} with $\delta=1/q$,
we see that $w_\ast(\cdot,t)\in A_1$ with $C_{A_1}$ uniform in $t\in\mathbb{R}$.
Finally, from \eqref{bac}, this implies immediately that
$w_\ast(\cdot,t)\in A_2$ with $C_{A_2}$ uniform in $t\in\mathbb{R}$.
\end{proof}

Let $\{A_0,A_1\}$ be an interpolation couple.
Namely, $A_0$ and $A_1$ are two complex Banach spaces, both linearly and continuously
embedded in a linear complex Hausdorff space.
For $0<t<\infty$ and $a\in A_0+A_1$, let us set
$$K(t,a)=\inf_{a=a_0+a_1}\|a_0\|_{A_0}+t\|a_1\|_{A_1}.$$
For $0<\theta<1$ and $1\leq q\leq\infty$, we denote by $(A_0,A_1)_{\theta,q}$
the real interpolation spaces equipped with the norms
$\|a\|_{(A_0,A_1)_{\theta,\infty}}=\sup_{0<t<\infty}t^{-\theta}K(t,a)$
and $\|a\|_{(A_0,A_1)_{\theta,q}}=\big(\int_0^\infty(t^{-\theta}K(t,a))^q\big)^{1/q}$, $1\leq q<\infty$.
In particular, $(A_0,A_1)_{\theta,q}=A_0=A_1$ if $A_0=A_1$.
See ~\cite{BL,Tr} for details.

We recall here two existing results concerning the real interpolation spaces.
The first one is the following bilinear interpolation lemma (see \cite{BL}, Section 3.13, Exercise 5(a)).

\begin{lem}\label{inter}
For $i=0,1$, let $A_i,B_i,C_i$ be Banach spaces and let $T$ be a bilinear operator such that
$$T:A_0\times B_0\rightarrow C_0
\quad\text{and}\quad
T:A_1\times B_1\rightarrow C_1.$$
Then one has
$$T:(A_0,A_1)_{\theta,p_1}\times(B_0,B_1)_{\theta,p_2}\rightarrow(C_0,C_1)_{\theta,q}$$
if\, $0<\theta<1$, $1\leq q\leq\infty$ and $1/q=1/p_1+1/p_2-1$.
\end{lem}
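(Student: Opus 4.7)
The plan is to reduce the bilinear interpolation to Young's discrete convolution inequality by passing to the $J$-method description of the three real interpolation spaces involved. The bilinearity of $T$ will convert a pair of $J$-decompositions into a genuine discrete convolution in the indexing variable, and Young's inequality in the form $1+1/q=1/p_1+1/p_2$ will then deliver precisely the exponent relation appearing in the conclusion.

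More precisely, I would first recall that for any Banach couple $(X_0,X_1)$ the space $(X_0,X_1)_{\theta,r}$ admits the equivalent $J$-description
\[
\|x\|_{(X_0,X_1)_{\theta,r}}\sim\inf\Big(\sum_{k\in\mathbb{Z}}(2^{-k\theta}J(2^k,x_k))^r\Big)^{1/r},\qquad J(t,y)=\max(\|y\|_{X_0},t\|y\|_{X_1}),
\]
the infimum being taken over representations $x=\sum_kx_k$ with $x_k\in X_0\cap X_1$. Given $a\in(A_0,A_1)_{\theta,p_1}$ and $b\in(B_0,B_1)_{\theta,p_2}$, I would fix such representations $a=\sum_k a_k$, $b=\sum_l b_l$, and let $M_i$ denote the norm of $T:A_i\times B_i\to C_i$. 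Bilinearity applied in each factor gives
\begin{align*}
\|T(a_k,b_l)\|_{C_0}&\le M_0\,J(2^k,a_k)J(2^l,b_l),\\
\|T(a_k,b_l)\|_{C_1}&\le M_1\,2^{-k-l}J(2^k,a_k)J(2^l,b_l),
\end{align*}
and hence the crucial unified estimate $J(2^{k+l},T(a_k,b_l))\le\max(M_0,M_1)\,J(2^k,a_k)J(2^l,b_l)$.

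Next I would group by the total index $m=k+l$: set $c_m=\sum_{k+l=m}T(a_k,b_l)$, so that formally $T(a,b)=\sum_m c_m$. Subadditivity of $J(2^m,\cdot)$ together with the bound above gives
\[
2^{-m\theta}J(2^m,c_m)\le C\sum_{k+l=m}\bigl(2^{-k\theta}J(2^k,a_k)\bigr)\bigl(2^{-l\theta}J(2^l,b_l)\bigr),
\]
the right-hand side being a discrete convolution. Young's inequality on $\ell^p(\mathbb{Z})$ with exponents $(p_1,p_2,q)$ tied by $1/q=1/p_1+1/p_2-1$ then yields
\[
\bigl\|(2^{-m\theta}J(2^m,c_m))_m\bigr\|_{\ell^q}\le C\|a\|_{(A_0,A_1)_{\theta,p_1}}\|b\|_{(B_0,B_1)_{\theta,p_2}},
\]
which, through the $J$-characterization of $(C_0,C_1)_{\theta,q}$, is exactly the desired mapping property.

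The main obstacle is justifying the double sum $\sum_{k,l}T(a_k,b_l)$: its absolute convergence in $C_0+C_1$ must be verified before one is allowed to regroup it, and the $K$-norm actually appearing in the definition of the target space must be matched against the $J$-norm produced by the argument. Both points are handled by the standard equivalence of the $K$- and $J$-methods, using that the condition $1\le q\le\infty$ together with $1/q=1/p_1+1/p_2-1$ forces $1/q\ge 0$, i.e.\ $1/p_1+1/p_2\ge 1$, which is precisely what guarantees the applicability of Young's inequality with nonnegative output exponent. Once these technicalities are in place the proof becomes bookkeeping; the conceptual content is that bilinearity intertwines perfectly with the discrete convolution structure intrinsic to $J$-decompositions, and Young's inequality is the source of the specific exponent relation in the statement.
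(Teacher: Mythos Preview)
The paper does not supply its own proof of this lemma; it is stated there as a known result, with a citation to Bergh--L\"ofstr\"om, Section~3.13, Exercise~5(a). Your argument via the $J$-method and Young's inequality is the standard way that exercise is solved, and it is correct as written (including your remark that the exponent relation $1/q=1/p_1+1/p_2-1$ is exactly the Young condition). So there is nothing to compare: you have filled in a proof the paper deliberately omitted, and your route is the expected one.
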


For $s\in\mathbb{R}$ and $1\leq q\leq\infty$,
let $\ell^s_q$ denote the weighted sequence space with the norm
$$\|\{x_j\}_{j\geq0} \|_{\ell^s_q}=
\begin{cases}
\big(\sum_{j\geq0}2^{jsq}|x_j|^q\big)^{1/q}
\quad\text{if}\quad q\neq\infty,\\
\,\sup_{j\geq0}2^{js}|x_j|
\quad\text{if}\quad q=\infty.
\end{cases}$$
Then the second one concerns some useful identities of real interpolation spaces of weighted spaces
(see Theorems 5.4.1 and 5.6.1 in \cite{BL}):

\begin{lem}\label{id}
Let $0<\theta<1$. Then one has
$$( L^{2} (w_0), L^{2} (w_1) )_{\theta,2} = L^2(w),\quad w= w_0^{1-\theta} w_1^{\theta},$$
and for $1\leq q_0,q_1,q\leq\infty$ and $s_0\neq s_1$,
$$(\ell^{s_0}_{q_0}, \ell^{s_1}_{q_1} )_{\theta, q}=\ell^s_q,\quad s= (1-\theta)s_0 + \theta s_1.$$
\end{lem}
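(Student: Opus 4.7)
Lemma \ref{id} collects two classical real-interpolation identities; both are proved in Bergh--L\"ofstr\"om by direct computation of the $K$-functional, and my plan is to follow exactly that route.

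For the weighted $L^2$ identity I would exploit the Hilbert-space structure. Since $\|\cdot\|_{L^2(w_i)}^2$ is an integral of a nonnegative quadratic in $|f|$, the infimum $K(t,f)=\inf_{f=f_0+f_1}(\|f_0\|_{L^2(w_0)}+t\|f_1\|_{L^2(w_1)})$ is equivalent (after squaring) to a pointwise minimization: for each $x$, minimize $|f_0(x)|^2w_0(x)+t^2|f_1(x)|^2w_1(x)$ under $f_0(x)+f_1(x)=f(x)$. This one-variable quadratic problem has minimum $\frac{t^2w_0(x)w_1(x)}{w_0(x)+t^2w_1(x)}|f(x)|^2$, giving
$$K(t,f)^2\sim\int_{\mathbb{R}^n}\frac{t^2w_0(x)w_1(x)}{w_0(x)+t^2w_1(x)}\,|f(x)|^2\,dx.$$
Plugging this into $\|f\|_{(L^2(w_0),L^2(w_1))_{\theta,2}}^2=\int_0^\infty(t^{-\theta}K(t,f))^2\,\frac{dt}{t}$, applying Fubini, and making the substitution $u=t^2w_1(x)/w_0(x)$ reduces the inner $t$-integral for each fixed $x$ to the Beta value $\int_0^\infty u^{-\theta}(1+u)^{-1}\,du=\pi/\sin(\pi\theta)$. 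What remains is exactly $\int w_0^{1-\theta}w_1^\theta|f|^2\,dx$, as claimed.

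For the weighted sequence-space identity I would compute $K(t,a)$ directly. Splitting each coordinate $a_j=b_j+c_j$ by assigning it entirely to whichever of $2^{js_0}$ or $t\,2^{js_1}$ is smaller yields the two-sided bound
$$K(t,a)\sim\Bigl(\sum_j\min(2^{js_0},t\,2^{js_1})^{q_\ast}|a_j|^{q_\ast}\Bigr)^{1/q_\ast}$$
when $q_0=q_1=q_\ast$; the assumption $s_0\neq s_1$ is what makes the threshold $j_\ast(t)$ defined by $2^{(s_1-s_0)j_\ast}\sim t^{-1}$ well-defined. Substituting this into the discrete interpolation norm $\|a\|_{(A_0,A_1)_{\theta,q}}^q\sim\sum_k(2^{-\theta k}K(2^k,a))^q$ and swapping the sums over $j$ and $k$ leaves, for each $j$, a geometric series in $k$ whose dominant term sits at $k\sim(s_1-s_0)j$. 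The weight attached to $|a_j|$ that emerges is $2^{j((1-\theta)s_0+\theta s_1)}$, which is exactly the $\ell^s_q$ weight with $s=(1-\theta)s_0+\theta s_1$.

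The main obstacle is the general case $q_0\neq q_1$ of the second identity, where a coordinate-wise splitting no longer realizes the $K$-functional exactly. The standard remedy is to switch to the equivalent $J$-method: one decomposes $a=\sum_k u_k$ with each $u_k$ concentrated near dyadic level $k$, estimates $J(2^k,u_k)$ using both norms simultaneously, and appeals to the equivalence between the $K$- and $J$-methods to transfer the bound. Both sides then produce the same $\ell^s_q$ scaling. Since these computations are standard and presented in full in Bergh--L\"ofstr\"om Theorems~5.4.1 and~5.6.1, I would simply cite those references in the paper itself.
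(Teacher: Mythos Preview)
Your proposal is correct, and your concluding plan---to cite Bergh--L\"ofstr\"om Theorems~5.4.1 and~5.6.1---is exactly what the paper does: the lemma is stated with a pointer to \cite{BL} and given no proof. Your sketches of the $K$-functional computations are accurate (and more detailed than anything the paper supplies), so nothing needs to change.
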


The following lemma can be seen as a version of the van der Corput lemma (\cite{St}, Chap. VIII)
to suit our purpose, and will be used in Subsection \ref{sec4.2} for the proof of Lemma \ref{local_prop}.

\begin{lem}\label{VanCor+IntPart}
Let $a>1$ and $t\in \mathbb{R}$. Then,
\begin{equation}\label{vand}
\bigg| \int e^{\pm iR\rho +it\rho^a} \Phi(\rho) d\rho \bigg|
\lesssim R^{-\frac{1}{2}}\big(\|\Phi\|_{L^\infty}+\|\Phi'\|_{L^\infty}\big)
\end{equation}
for $R>1$ and $\Phi\in C^1(\mathbb{R})$ supported in $[1/2,2]$.
Here, $\Phi'$ denotes the derivative $d\Phi/d\rho$.
\end{lem}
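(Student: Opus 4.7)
The plan is to prove this oscillatory integral bound by a dichotomy based on the size of $|t|$ relative to $R$, using integration by parts in the non-stationary regime and van der Corput's second-derivative lemma in the stationary regime. Set $\phi(\rho) := \pm R\rho + t\rho^a$, so that $\phi'(\rho) = \pm R + a t \rho^{a-1}$ and $\phi''(\rho) = a(a-1) t \rho^{a-2}$. Since $\rho$ lies in the support of $\Phi$, which is contained in $[1/2, 2]$, the factors $\rho^{a-1}$ and $\rho^{a-2}$ are bounded above and below by positive constants depending only on $a$.

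First, in the regime $|t| \leq c R$ with $c = c(a)$ chosen small enough, the phase is non-stationary on the support of $\Phi$: $|\phi'(\rho)| \geq R - a c\, 2^{a-1} R \geq R/2$. A single integration by parts, using $\frac{d}{d\rho}(e^{i\phi}) = i\phi' e^{i\phi}$ together with the compact support of $\Phi$ to discard boundary terms, yields
$$\int e^{i\phi(\rho)}\Phi(\rho)\, d\rho = -\int e^{i\phi(\rho)}\,\frac{d}{d\rho}\!\left(\frac{\Phi(\rho)}{i\phi'(\rho)}\right) d\rho.$$
The integrand is pointwise bounded by $|\Phi'|/|\phi'| + |\Phi|\,|\phi''|/|\phi'|^2 \lesssim R^{-1}(\|\Phi'\|_\infty + \|\Phi\|_\infty)$, since in this case $|\phi''| \lesssim |t| \lesssim R$. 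This gives a decay of order $R^{-1}$, which is strictly better than the required $R^{-1/2}$.

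In the complementary regime $|t| \geq c R$, I would apply the second-derivative van der Corput lemma. Using $\rho \in [1/2, 2]$ and $a > 1$, we have $|\phi''(\rho)| \gtrsim |t|$ uniformly on the support of $\Phi$, with constants depending only on $a$. Van der Corput then gives
$$\left|\int e^{i\phi(\rho)}\Phi(\rho)\, d\rho\right| \lesssim |t|^{-1/2}\bigl(\|\Phi\|_\infty + \|\Phi'\|_{L^1}\bigr) \lesssim |t|^{-1/2}\bigl(\|\Phi\|_\infty + \|\Phi'\|_\infty\bigr),$$
where the $L^1$ norm is absorbed using that the support of $\Phi$ has length at most $3/2$. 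Since $|t| \geq cR$, this is bounded by $C R^{-1/2}(\|\Phi\|_\infty + \|\Phi'\|_\infty)$, as required.

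The only real obstacle is ensuring that $\phi''$ is uniformly non-degenerate on the support of $\Phi$ in the second case; this is exactly where the hypothesis $a > 1$ is needed, since it prevents $\phi''$ from vanishing identically and, combined with $\rho$ being bounded away from $0$, yields the uniform lower bound $|\phi''(\rho)| \gtrsim |t|$ required to run van der Corput. Combining the two regimes yields the claimed $R^{-1/2}$ bound.
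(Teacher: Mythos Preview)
Your proof is correct and follows essentially the same approach as the paper: split according to the size of $|t|$ relative to $R$, then apply integration by parts in the non-stationary regime and the second-derivative van der Corput lemma in the stationary regime. The only cosmetic difference is that the paper places the van der Corput region at $|t|\sim R$ and handles both $|t|\ll R$ and $|t|\gg R$ by integration by parts (using $|\phi'|\gtrsim R$ in either case), whereas you apply van der Corput for all $|t|\gtrsim cR$; both choices yield the required $R^{-1/2}$ bound.
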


\begin{proof}
We first decompose the left-hand side of \eqref{vand} as
    \begin{align*}
    \bigg| \int e^{\pm iR\rho +it\rho^a} \Phi(\rho) d\rho \bigg|
    &\leq \chi_{\{\frac{R}{8a}<|t|<\frac{8R}a\}}(t) \bigg| \int e^{\pm iR\rho +it\rho^a} \Phi(\rho) d\rho \bigg| \\
    &\quad + (1-\chi_{\{\frac{R}{8a}<|t|<\frac{8R}a\}}(t))
    \bigg| \int e^{\pm iR\rho +it\rho^a} \Phi(\rho) d\rho \bigg|.
    \end{align*}
Then, when $\frac R{8a}<|t|<\frac{8R}{a}$, by the van der Corput lemma,
it follows that
    $$
    \bigg| \int e^{\pm iR\rho +it\rho^a} \Phi(\rho) d\rho \bigg|
    \lesssim R^{-\frac{1}{2}}\big(\|\Phi\|_{L^\infty}+\|\Phi'\|_{L^\infty}\big).
    $$
For the second part where $|t|>\frac{8R}a$ or $|t|<\frac R{8a}$,
we first see that
    \begin{align*}
    &\int \Big(1-\frac{a(a-1)t\rho^{a-2}}{i(\pm R +a t\rho^{a-1})^2}\Big)
        e^{\pm iR\rho+it\rho^a} \Phi(\rho) d\rho\\
    &\quad=\Big[\frac{1}{i(\pm R +a t\rho^{a-1})}e^{\pm iR\rho+it\rho^a}
        \Phi(\rho)  \Big]_{\rho=1/2}^{\rho=2}
    -\int \frac{1}{i(\pm R+at\rho^{a-1})}e^{\pm iR\rho+it\rho^a}\Phi'(\rho)d\rho
    \end{align*}
by the integration by parts.
Here we note that $|\pm R+a t\rho^{a-1}|\gtrsim R$
when $|t|>\frac{8R}a$ or $|t|<\frac R{8a}$.
From this and the support of $\Phi$, we now get
\begin{align*}
\bigg|\int e^{\pm iR\rho+it\rho^a} \Phi(\rho) d\rho\bigg|
&\leq\int\bigg|\frac{a(a-1)t\rho^{a-2}}
{(\pm R +a t\rho^{a-1})^2} \Phi(\rho) \bigg|d\rho
+\int\bigg|\frac{1}{(\pm R +a t\rho^{a-1})}\Phi'(\rho) \bigg| d\rho\\
&\lesssim R^{-1} \big( \| \Phi \|_{L^\infty} + \|\Phi'\|_{L^\infty} \big)
\end{align*}
as desired.
\end{proof}


\section{Proof of Theorem \ref{thm_par}}\label{sec3}

This section is devoted to proving Theorem \ref{thm_par} assuming Proposition \ref{prop_par}
which will be proved in Section \ref{sec4}.

Let us first consider the multiplier operators $P_kf$ for $k\in\mathbb{Z}$ which are defined by
$$\widehat{P_kf}(\xi)=\psi(2^{-k}|\xi|)\widehat{f}(\xi),$$
where $\psi:\mathbb{R}\rightarrow[0,1]$ is a smooth cut-off function which is supported in $(1/2,2)$
and satisfies
$$\sum_{k=-\infty}^\infty \psi(2^{-k}t)=1,\quad t>0.$$
Then we will obtain the following frequency localized estimates in the next section
which imply Theorem \ref{thm_par}
using Lemma \ref{lem2_par} and the Littlewood-Paley theorem on weighted $L^2$ spaces.

\begin{prop}\label{prop_par}
Let $n\geq2$.
Assume that $f$ and $F$ are radial functions with respect to the spatial variable $x$.
Then we have
    \begin{equation}\label{freq_par}
    \big\|e^{it(-\Delta)^{a/2}} P_k f \big\|_{L^2(w)}\leq C2^{k(\beta -a)/2}
    \|w\|_{\mathfrak{L}^{\beta,p}_{a-par}}^{1/2}\|f\|_{L^2}
    \end{equation}
and
    \begin{equation}\label{base_par}
    \bigg\|\int_{0}^{t}e^{i(t-s)(-\Delta)^{a/2}}P_k F(\cdot,s)ds\bigg\|_{L^2(w)}
    \leq C2^{k(\beta -a)/2}\|w\|_{\mathfrak{L}^{\beta,p}_{a-par}} \|F\|_{L^2(w^{-1})}
    \end{equation}
if\, $\beta>1+a/p$, $a>1$ and $1<p\leq(n+a)/\beta$.
\end{prop}

To deduce Theorem \ref{thm_par} from this proposition, we first observe that
we may assume $w(\cdot,t)\in A_2(\mathbb{R}^n)$ uniformly in almost every $t\in\mathbb{R}$.
Indeed, since $w\leq w_\ast$ and
$\|w_*\|_{\mathfrak{L}_{a-par}^{\beta,p}}\leq C\|w\|_{\mathfrak{L}_{a-par}^{\beta,p}}$
for $\beta>a/p$ and $p>q>1$ (see Lemma \ref{lem2_par}),
if we show the homogeneous estimate \eqref{hop_H_par2} replacing $w$ with $w_\ast$, we get
    \begin{align*}
    \big\|e^{it(-\Delta)^{a/2}} f \big\|_{L^2(w(x,t))}
    &\leq \big\|e^{it(-\Delta)^{a/2}} f \big\|_{L^2(w_*(x,t))} \\
    &\leq C\|w_*\|_{\mathfrak{L}_{a-par}^{a+2s,p}}^{1/2}\|f\|_{\dot{H}^s}\\
    &\leq C\|w\|_{\mathfrak{L}_{a-par}^{a+2s,p}}^{1/2}\|f\|_{\dot{H}^s}
    \end{align*}
as desired.
Similarly for the inhomogeneous estimate \eqref{inho_par}.
So we may show the estimates \eqref{hop_H_par2} and \eqref{inho_par} by replacing $w$ with $w_\ast$.
By this replacement and the property $w_\ast(\cdot,t)\in A_2(\mathbb{R}^n)$ in Lemma \ref{lem2_par},
we may assume, for simplicity of notation, that $w(\cdot,t)\in A_2(\mathbb{R}^n)$ uniformly in almost every $t\in\mathbb{R}$.
Since the constant $C_{A_1}$ in \eqref{max} is independent of $w$,
(from the proof of Lemma \ref{lem2_par}) we see that $w_\ast(\cdot,t)\in A_2(\mathbb{R}^n)$ with $C_{A_2}$ independent of $w$.
Thus we may also assume that $w(\cdot,t)\in A_2(\mathbb{R}^n)$  with $C_{A_2}$ independent of $w$.

By this $A_2$ condition we can use the Littlewood-Paley theorem on weighted $L^2$ spaces
(see Theorem 1 in \cite{Ku} and also Theorem 5 in \cite{Kop})
to get
    \begin{align*}
    \big\|e^{it(-\Delta)^{a/2}}f\big\|_{L^2(w(x,t))}^2
    &=\int\big\|e^{it(-\Delta)^{a/2}}f\big\|_{L^2(w(\cdot,t))}^2dt\\
    &\leq C\int\bigg\|\bigg(\sum_k\big|P_ke^{it(-\Delta)^{a/2}}f\big|^2\bigg)^{1/2}\bigg\|_{L^2(w(\cdot,t))}^2dt\\
    &=C\sum_k\big\|e^{it(-\Delta)^{a/2}}P_kf\big\|_{L^2(w(x,t))}^2.
    \end{align*}
Here the constant $C$ which follows from the Littlewood-Paley theorem is generally depending on the weight $w$ by
$C=C_w=C_{A_2}$, but in our case $C_{A_2}$ is independent of $w$ (see the first paragraph below Proposition \ref{prop_par}).
On the other hand, since $P_kP_jf=0$ if $|j-k|\geq2$, it follows from \eqref{freq_par} that
    \begin{align*}
    \sum_k\big\|e^{it(-\Delta)^{a/2}}P_kf\big\|_{L^2(w(x,t))}^2
    &=\sum_k\big\|e^{it(-\Delta)^{a/2}}P_k\big(\sum_{|j-k|\leq1}P_jf\big)\big\|_{L^2(w(x,t))}^2\\
    &\leq C\|w\|_{\mathfrak{L}^{\beta,p}_{a-par}}\sum_k2^{k(\beta -a)}\big\|\sum_{|j-k|\leq1}P_jf\big\|_2^2
    \end{align*}
if $\beta>1+a/p$, $a>1$ and $1<p\leq(n+a)/\beta$.
Consequently, by taking $\beta=a+2s$, we get
    $$
    \big\|e^{it(-\Delta)^{a/2}} f \big\|_{L^2(w(x,t))}
    \leq C\|w\|_{\mathfrak{L}^{a+2s,p}_{a-par}}^{1/2}\|f\|_{\dot{H}^s}
    $$
if\, $a>1$, $s>\frac a2(\frac1p-1)$ and $\max\{\frac{a}{a-1+2s},1\}<p\leq\frac{n+a}{a+2s}$, as desired.

The inhomogeneous estimate \eqref{inho_par} follows also from the same argument.
Indeed, by the Littlewood-Paley theorem as before,
one can see that
    \begin{align*}
    &\bigg\|\int_{0}^{t} e^{i(t-s)(-\Delta)^{a/2}}F(\cdot,s)ds\bigg\|_{L^2(w(x,t))}^2\\
    &\qquad\leq C\sum_{k}\bigg\|\int_{0}^{t}e^{i(t-s)(-\Delta)^{a/2}}P_k\big(\sum_{|j-k|\leq1}P_j F(\cdot,s)\big)ds \bigg\|_{L^2(w(x,t))}^2.
    \end{align*}
By using \eqref{base_par}, the right-hand side in the above is bounded by
    $$
    C\|w\|_{\mathfrak{L}^{\beta,p}_{a-par}}^2
    \sum_{k} 2^{k(\beta -a)} \big\|\sum_{|j-k|\leq1}P_j F\big\|_{L^2(w(x,t)^{-1})}^2
    $$
if $\beta>1+a/p$, $a>1$ and $1<p\leq(n+a)/\beta$.
Since $w(\cdot,t)^{-1}\in A_2(\mathbb{R}^n)$ if and only if $w(\cdot,t)\in A_2(\mathbb{R}^n)$,
by applying the Littlewood-Paley theorem again and taking $\beta=a$, this is now bounded by
$C\|w\|_{\mathfrak{L}^{a,p}_{a-par}}^2\|F\|_{L^2(w(x,t)^{-1})}^2$
if\, $a>1$ and $a/(a-1)<p\leq(n+a)/a$.
Consequently, we get \eqref{inho_par}.
Theorem \ref{thm_par} is now proved.

\section{Proof of Proposition \ref{prop_par}}\label{sec4}

In this section we prove Proposition \ref{prop_par}.
We first show \eqref{freq_par} assuming Lemma \ref{local_prop} which is proved in
Subsection \ref{sec4.2}, and then \eqref{base_par} follows from a similar argument
in Subsection \ref{sec4.3}.

\subsection{Proof of \eqref{freq_par}}\label{sec4.1}

From the scaling $(x,t)\rightarrow(\lambda x,\lambda^{a}t)$,
it is enough to show the following case where $k=0$:
\begin{equation}\label{freq2}
\big\|e^{it(-\Delta)^{a/2}} P_0 f \big\|_{L^2(w(x,t))}
\leq C\|w\|_{\mathfrak{L}^{\beta,p}_{a-par}}^{1/2}\|f\|_{L^2},
\end{equation}
where $\beta>1+a/p$, $a>1$ and $1<p\leq(n+a)/\beta$.
In fact, once we show this estimate, we get
    \begin{align*}
    \big\|e^{it(-\Delta)^{a/2}}P_k f\big\|_{L^2(w(x,t))}^2
    &\leq C2^{-kn}2^{-ak}\big\|e^{it(-\Delta)^{a/2}}P_0(f(2^{-k}\cdot))\big\|_{L^2(w(2^{-k}x,2^{-ak}t))}^2\\
    &\leq C2^{-kn}2^{-ak}\|w(2^{-k}x,2^{-ak}t)\|_{\mathfrak{L}^{\beta,p}_{a-par}}\|f(2^{-k}\cdot)\|_2^2\\
    &\leq C2^{k(\beta -a)}\|w\|_{\mathfrak{L}^{\beta,p}_{a-par}}\|f\|_2^2
    \end{align*}
as desired.

Now, by duality, \eqref{freq2} is equivalent to
    \begin{equation*}
    \bigg\|\int_{\mathbb{R}} e^{-is(-\Delta)^{a/2}}P_0F(\cdot,s)ds\bigg\|_{L_x^2}
    \leq C\|w\|_{\mathfrak{L}^{\beta,p}_{a-par}}^{1/2}\|F\|_{L^2(w^{-1})},
    \end{equation*}
where we only use functions $F$ which are radial with respect to $x$-variable,
since $f$ is radial and the Schr\"odinger group evaluation and the Fourier projections keep the radial property.
Then, by using the usual $TT^*$ argument it is enough to show the following bilinear form estimate
    \begin{align}\label{bfs00}
    \nonumber\bigg|\bigg\langle\int_{\mathbb{R}} \Big( e^{i(t-s)(-\Delta)^{a/2}}P_0^2F(\cdot,s) \Big)(x) &ds,G(x,t)\bigg\rangle\bigg|\\
    &\leq C\|w\|_{\mathfrak{L}^{\beta,p}_{a-par}}\|F\|_{L^2(w^{-1})}\|G\|_{L^2(w^{-1})}
    \end{align}
for $\beta>1+a/p$, $a>1$ and $1<p\leq(n+a)/\beta$.
Of course, $F$ and $G$ are assumed here to be radial with respect to the space variable $x$.
For this estimate, we first decompose the involved functions into spatial-localized pieces as follows:
    $$
    F(x,t)= \chi_{[0,1)}(|x|) F(x,t) + \sum_{j=1}^{\infty} \chi_{[2^{j-1},2^j)}(|x|) F(x,t)
    $$
and
    $$
    G(x,t)= \chi_{[0,1)}(|x|) G(x,t) + \sum_{k=1}^{\infty} \chi_{[2^{k-1},2^k)}(|x|) G(x,t).
    $$
For simplicity, we set
\begin{equation}\label{qet}
F_0 =\chi_{[0,1)}(|x|)F,\quad F_j =\chi_{[2^{j-1},2^j)}(|x|)F,\quad j\geq1,
\end{equation}
and
\begin{equation}\label{qet2}
G_0 =\chi_{[0,1)}(|x|)G,\quad G_k =\chi_{[2^{k-1},2^k)}(|x|)G,\quad k\geq1.
\end{equation}
Then, by using this decomposition we are reduced to showing that
\begin{align}\label{bilin_par}
\nonumber\sum_{j,k=0}^{\infty} \bigg|\bigg\langle\int_{\mathbb{R}} \Big( e^{i(t-s)(-\Delta)^{a/2}}P_0^2F_j(\cdot,s) &\Big)(x) ds, G_k(x,t)\bigg\rangle\bigg|\\
&\leq C\|w\|_{\mathfrak{L}^{\beta,p}_{a-par}}\|F\|_{L^2(w^{-1})}\|G\|_{L^2(w^{-1})}
\end{align}
for $\beta>1+a/p$, $a>1$ and $1<p\leq(n+a)/\beta$.

To show \eqref{bilin_par}, we assume for the moment the following three estimates for $a>1$ which will be shown later:
\begin{itemize}
\item For $j,k\geq0$,
    \begin{equation}\label{inter_1}
    \bigg|\bigg\langle\int_{\mathbb{R}} \Big( e^{i(t-s)(-\Delta)^{a/2}}P_0^2 F_j(\cdot,s)\Big)(x)ds, G_k(x,t) \bigg\rangle\bigg|
    \lesssim 2^{\frac{1}{2}(j+k)}\|F\|_{L^2}\|G\|_{L^2}.
    \end{equation}
\item For $|j-k|\leq1$,
\begin{align}\label{inter_3'}
\nonumber\bigg|\bigg\langle\int_{\mathbb{R}} \Big( e^{i(t-s)(-\Delta)^{a/2}}&P_0^2F_j(\cdot,s)\Big)(x)ds, G_k(x,t)\bigg\rangle\bigg|\\
\lesssim&2^{(\frac{a+1}{2}-\frac{\beta p}{2})(j+k)}
\|w\|_{\mathfrak{L}^{\beta,p}_{a-par}}^p\|F\|_{L^2(w^{-p})}\|G\|_{L^2(w^{-p})}.
\end{align}
\item For $|j-k|>1$,
\begin{align}\label{inter_3}
\nonumber\bigg|\bigg\langle\int_{\mathbb{R}} \Big(e^{i(t-s)(-\Delta)^{a/2}}&P_0^2F_j(\cdot,s)\Big)(x)ds, G_k(x,t) \bigg\rangle\bigg|\\
\nonumber&\quad\lesssim2^{(\frac{2a+1}{4}-\frac{\beta p}{2})(j+k)}2^{-\frac{1}{4}|j-k|}
2^{\max(0,\frac12[M(j,k)-am(j,k)])}\\
&\qquad\qquad\times\|w\|_{\mathfrak{L}^{\beta,p}_{a-par}}^p\|F\|_{L^2(w^{-p})}\|G\|_{L^2(w^{-p})},
\end{align}
where $M(j,k):=\max(j,k)$ and $m(j,k):=\min(j,k)$.
\end{itemize}
When $|j-k|\leq1$, by the bilinear interpolation (see Lemma \ref{inter}) between \eqref{inter_1} and \eqref{inter_3'}, it follows that
\begin{align}\label{sdf}
\nonumber\bigg|\bigg\langle\int_{\mathbb{R}} \Big(e^{i(t-s)(-\Delta)^{a/2}}
&P_0^2F_j(\cdot,s)\Big)(x)ds, G_k(x,t)\bigg\rangle\bigg| \\
&\lesssim2^{(\frac{1}{2} +\frac{a}{2p} -\frac{\beta}{2})(j+k)}
\|w\|_{\mathfrak{L}^{\beta,p}_{a-par}} \|F\|_{L^2(w^{-1})} \|G\|_{L^2(w^{-1})}
\end{align}
for $a>1$ and $1<p\leq(n+a)/\beta$.
Indeed, let $T$ be a bilinear vector-valued operator defined by
$$T(F,G)=\bigg\{\bigg\langle\int_{\mathbb{R}} \Big(e^{i(t-s)(-\Delta)^{a/2}}
P_0^2F_j(\cdot,s)\Big)(x)ds, G_k(x,t)\bigg\rangle\bigg\}_{j\geq0}$$
for fixed $k\geq0$.
Then, \eqref{inter_1} and \eqref{inter_3'} are equivalent to
$$T:L^2\times L^2\rightarrow \ell_\infty^{\gamma_0}
\quad\text{and}\quad
T:L^2(w^{-p})\times L^2(w^{-p})\rightarrow \ell_\infty^{\gamma_1}$$
with the operator norms $2^{k/2}$ and
$2^{(\frac{a+1}{2}-\frac{\beta p}{2})k}\|w\|_{\mathfrak{L}^{\beta,p}_{a-par}}^p$,
respectively, where
$\gamma_0=2^{-j/2}$ and $\gamma_1=2^{-(\frac{a+1}{2}-\frac{\beta p}{2})j}$.
Now, by applying Lemma \ref{inter} with $\theta=1/p$, $q=\infty$ and $p_1=p_2=2$, we get
$$T:(L^2,L^2(w^{-p}))_{1/p,2}\times(L^2,L^2(w^{-p}))_{1/p,2}
\rightarrow(\ell_\infty^{\gamma_0},\ell_\infty^{\gamma_1})_{1/p,\infty}$$
for $1<p<\infty$, with the operator norm
$$2^{\frac k2(1-\frac1p)}2^{\frac1p(\frac{a+1}{2}-\frac{\beta p}{2})k}\|w\|_{\mathfrak{L}^{\beta,p}_{a-par}}
=2^{(\frac{1}{2} +\frac{a}{2p} -\frac{\beta}{2})k}\|w\|_{\mathfrak{L}^{\beta,p}_{a-par}}.$$
Finally, using the real interpolation space identities in Lemma \ref{id},
this implies that
$$T:L^2(w^{-1})\times L^2(w^{-1})\rightarrow\ell_\infty^{\gamma}$$
with the operator norm $2^{(\frac{1}{2} +\frac{a}{2p}-\frac{\beta}{2})k}\|w\|_{\mathfrak{L}^{\beta,p}_{a-par}}$
and
$\gamma=(1-\frac1p)\gamma_0+\frac1p\gamma_1=2^{-(\frac{1}{2} +\frac{a}{2p}-\frac{\beta}{2})j}$.
Clearly, this is equivalent to \eqref{sdf}.
Now, if $\beta>1+a/p$, we get from \eqref{sdf} that
\begin{align}\label{ert}
\nonumber\sum_{\{|j-k|\leq1\}} \bigg|\bigg\langle\int_{\mathbb{R}} \Big(e^{i(t-s)(-\Delta)^{a/2}}&P_0^2F_j(\cdot,s)\Big)(x)ds, G_k(x,t)\bigg\rangle\bigg|\\
&\leq C\|w\|_{\mathfrak{L}^{\beta,p}_{a-par}}\|F\|_{L^2(w^{-1})}\|G\|_{L^2(w^{-1})}.
\end{align}

On the other hand, when $|j-k|>1$,
by the bilinear interpolation between \eqref{inter_1} and \eqref{inter_3} as above,
it follows that
\begin{align}\label{dfg}
\nonumber\bigg|\bigg\langle\int_{\mathbb{R}} \Big(e^{i(t-s)(-\Delta)^{a/2}}
&P_0^2 F_j(\cdot,s)\Big)(x)ds, G_k(x,t)\bigg\rangle\bigg|\\
\nonumber&\lesssim2^{(\frac{1}{2}+\frac{2a-1}{4p}-\frac{\beta}{2})(j+k)}2^{-\frac{1}{4p}|j-k|}
2^{\frac1p\max(0,\frac12[M(j,k)-am(j,k)])}\\
&\qquad\qquad\times\|w\|_{\mathfrak{L}^{\beta,p}_{a-par}}\|F\|_{L^2(w^{-1})}\|G\|_{L^2(w^{-1})}
\end{align}
for $a>1$ and $1<p\leq(n+a)/\beta$.
Now we divide cases into $j\geq k$ and $j\leq k$.
Then, when $|j-k|>1$ and $j\geq k$, from \eqref{dfg} we see that
\begin{align*}
\sum_{\{|j-k|>1,j\geq k\}}\bigg|\bigg\langle\int_{\mathbb{R}}& \Big(e^{i(t-s)(-\Delta)^{a/2}}
P_0^2 F_j(\cdot,s)\Big)(x)ds, G_k(x,t)\bigg\rangle\bigg|\\
&\lesssim \sum_{\{|j-k|>1,j\geq k\}}2^{(\frac{1}{2} +\frac{2a-1}{4p} -\frac{\beta}{2})(j+k)} 2^{-\frac{1}{4p}(j-k)}2^{\max(0,\frac{1}{2p}(j- ak))}\\
&\qquad\qquad\qquad\qquad\times\|w\|_{\mathfrak{L}^{\beta,p}_{a-par}}\|F\|_{L^2(w^{-1})}\|G\|_{L^2(w^{-1})}.
\end{align*}
Since the right-hand side in the above is decomposed as
    \begin{align*}
    \sum_{k=0}^{\infty}&2^{(\frac{1}{2}+\frac{a}{2p}-\frac{\beta}{2})k}
        \sum_{j=k+2}^{ak}2^{(\frac{1}{2}+\frac{a-1}{2p}-\frac{\beta}{2})j}
        \|w\|_{\mathfrak{L}^{\beta,p}_{a-par}}\|F\|_{L^2(w^{-1})}\|G\|_{L^2(w^{-1})}\\
    &+\sum_{k=0}^{\infty}2^{(\frac{1}{2}-\frac{\beta}{2})k}
        \sum_{j=ak}^{\infty}2^{(\frac{1}{2}+\frac{a}{2p}-\frac{\beta}{2})j}
        \|w\|_{\mathfrak{L}^{\beta,p}_{a-par}}\|F\|_{L^2(w^{-1})}\|G\|_{L^2(w^{-1})},
    \end{align*}
we get
\begin{align}\label{ert2}
\nonumber\sum_{\{|j-k|>1,j\geq k\}} \bigg|\bigg\langle\int_{\mathbb{R}} \Big(e^{i(t-s)(-\Delta)^{a/2}}&P_0^2F_j(\cdot,s)\Big)(x)ds, G_k(x,t)\bigg\rangle\bigg|\\
&\leq C\|w\|_{\mathfrak{L}^{\beta,p}_{a-par}}\|F\|_{L^2(w^{-1})}\|G\|_{L^2(w^{-1})}
\end{align}
if $\beta>1+a/p$.
Obviously, when $|j-k|>1$ and $j\leq k$, we get similarly
\begin{align}\label{ert3}
\nonumber\sum_{\{|j-k|>1,j\leq k\}} \bigg|\bigg\langle\int_{\mathbb{R}} \Big(e^{i(t-s)(-\Delta)^{a/2}}&P_0^2F_j(\cdot,s)\Big)(x)ds, G_k(x,t)\bigg\rangle\bigg|\\
&\leq C\|w\|_{\mathfrak{L}^{\beta,p}_{a-par}}\|F\|_{L^2(w^{-1})}\|G\|_{L^2(w^{-1})}
\end{align}
for $\beta>1+a/p$, $a>1$ and $1<p\leq(n+a)/\beta$.
Combining \eqref{ert}, \eqref{ert2} and \eqref{ert3}, we now obtain the desired estimate \eqref{bilin_par}.

\subsubsection{Proofs of \eqref{inter_1}, \eqref{inter_3'} and \eqref{inter_3}.}\label{sec4.1.1}
It remains to show the three estimates \eqref{inter_1}, \eqref{inter_3'} and \eqref{inter_3}.
These estimates are derived from the following lemma which will be shown in Subsection \ref{sec4.2}.

\begin{lem}\label{local_prop}
Let $n\geq2$.
For integers $j,k\geq0$, let $F_j$ and $G_k$ be given as in \eqref{qet} and \eqref{qet2}, respectively,
which are radial functions on $\mathbb{R}^{n+1}$ with respect to the spatial variable $x$.
If\, $a>1$, we then have the following three estimates:
\begin{itemize}
\item For $j,k\geq0$,
    \begin{equation}\label{interpolation_value_2}
    \bigg| \bigg\langle \int_{\mathbb{R}} \Big(e^{i(t-s)(-\Delta)^{a/2}}P_0^2 F_j(\cdot,s)\Big)(x)ds, G_k(x,t) \bigg\rangle \bigg|
    \lesssim 2^{\frac{1}{2}(j+k)}
        \|F_j\|_{L^{2}_{x,t}} \|G_k\|_{L^{2}_{x,t}}.
    \end{equation}
\item For $|j-k|\leq1$,
    \begin{align}\label{interpolation_value_3}
    \nonumber\noindent\bigg| \bigg\langle \int_{\mathbb{R}} \Big(e^{i(t-s)(-\Delta)^{a/2}}P_0^2 F_j(\cdot,s)\Big)(x)ds, &G_k(x,t) \bigg\rangle \bigg|\\
    &\lesssim 2^{-\frac{n-1}{2}(j+k)} \|F_j\|_{L^{1}_{x,t}} \|G_k\|_{L^{1}_{x,t}} .
    \end{align}
\item For $|j-k|>1$,
    \begin{align}\label{interpolation_value}
    \nonumber\bigg| \bigg\langle \int_{\mathbb{R}} \Big(e^{i(t-s)(-\Delta)^{a/2}}P_0^2 F_j(\cdot,s)&\Big)(x)ds,G_k(x,t) \bigg\rangle \bigg|\\
    &\lesssim 2^{-\frac{2n-1}{4}(j+k)} 2^{-\frac{1}{4}|j-k|}
        \|F_j\|_{L^{1}_{x,t}} \|G_k\|_{L^{1}_{x,t}}.
    \end{align}
    \end{itemize}
\end{lem}

Indeed, the estimate \eqref{inter_1} is just the same as \eqref{interpolation_value_2}.
From now on, we deduce \eqref{inter_3'} and \eqref{inter_3} from \eqref{interpolation_value_3}
and \eqref{interpolation_value}, respectively.
For fixed $j,k\geq0$, we denote $R= \max(2^j, 2^k)$,
and we set
\begin{equation*}
\phi_{\nu}^0(t)=\chi_{[\nu-R,\nu+R)}(t)
\end{equation*}
and for $l\geq1$
\begin{equation*}
\phi_{\nu+}^l(t)=\chi_{[\nu+2^{l-1}R,\nu+2^lR)}(t),\quad
\phi_{\nu-}^l(t)=\chi_{[\nu-2^{l}R,\nu-2^{l-1}R)}(t).
\end{equation*}
Then we may write
\begin{align}\label{ell}
\bigg\langle&\int_{\mathbb{R}} \Big(e^{i(t-s)(-\Delta)^{a/2}}P_0^2F_j(\cdot,s)\Big)(x)ds, G_k(x,t)\bigg\rangle\\
\nonumber&=\sum_{\nu\in R\mathbb{Z}}
\bigg\langle\int_{\mathbb{R}} \Big(e^{i(t-s)(-\Delta)^{a/2}}P_0^2F_j(\cdot,s)\Big)(x)ds,\phi_{\nu}^0 G_k(x,t)\bigg\rangle\\
\nonumber&=\sum_{\nu\in R\mathbb{Z}}
\bigg\langle \int_{\mathbb{R}} \Big(e^{i(t-s)(-\Delta)^{a/2}}
P_0^2 \big([\phi_{\nu}^0+\sum_{l=1}^{\infty}(\phi_{\nu+}^l+\phi_{\nu-}^l)]F_j \big)(\cdot,s)\Big)(x)ds,\phi_\nu^0 G_k(x,t)\bigg\rangle.
\end{align}
To show \eqref{inter_3'} and \eqref{inter_3}, we assume for the moment that
\begin{align}\label{Lemma_case:l>1}
\nonumber\sum_{\nu\in R\mathbb{Z}}\sum_{l=2}^{\infty}
\bigg|\bigg\langle\int_{\mathbb{R}} \Big(e^{i(t-s)(-\Delta)^{a/2}}
&P_0^2(\phi_{\nu}^l F_j)(\cdot,s)\Big)(x)ds, \phi_\nu^0 G_k(x,t)\bigg\rangle\bigg|\\
\leq &C_M 2^{-(j+k)M}
\|w\|_{\mathfrak{L}_{a-par}^{\beta,p}}^p\|F\|_{L^2(w^{-p})}\|G\|_{L^2(w^{-p})}
\end{align}
for a sufficiently large number $M>0$,
where $\phi_{\nu}^l$ stands for $\phi_{\nu+}^l$ or $\phi_{\nu-}^l$.
This will be shown in the end of this subsection.
Then by \eqref{Lemma_case:l>1}, we only need to bound
\begin{equation}\label{bound}
\sum_{\nu\in R\mathbb{Z}}
\bigg\langle \int_{\mathbb{R}} \Big(e^{i(t-s)(-\Delta)^{a/2}}
P_0^2(\phi_{\nu}F_j)(\cdot,s)\Big)(x)ds, \phi_\nu^0 G_k(x,t)\bigg\rangle,
\end{equation}
where $\phi_{\nu}(t):=(\phi_{\nu}^0+\phi_{\nu+}^1+\phi_{\nu-}^1)(t)=\chi_{[v-2R, v+2R)}(t)$.

To show the bound \eqref{inter_3'} for \eqref{bound},
from \eqref{interpolation_value_3} we first see that
$$
\begin{aligned}
\bigg| \bigg\langle \int_{\mathbb{R}} \Big(e^{i(t-s)(-\Delta)^{a/2}}P_0^2 (\phi_\nu F_j)(\cdot,s)&\Big)(x)ds, \phi_\nu^0 G_k(x,t) \bigg\rangle \bigg| \\
&\lesssim 2^{-\frac{n-1}{2}(j+k)}\big\| \phi_\nu F_j \big\|_{L^1} \big\| \phi_\nu^0 G_k \big\|_{L^1},
\end{aligned}
$$
and note that
\begin{equation}\label{case:l=0_Holder2}
\big\|\phi_\nu F_j\big\|_{L^1}
\leq\big\|\phi_\nu F\big\|_{L^2(w^{-p})}
\big\|\phi_\nu(t)\chi_{[2^j,2^{j+1})}(|x|)w(x,t)^{p/2}\big\|_{L^2}
\end{equation}
and
\begin{equation}\label{case:l=0_Holder23}
\big\|\phi_\nu^0 G_k\big\|_{L^1}
\leq\big\|\phi_\nu^0 G\big\|_{L^2(w^{-p})}
\big\|\phi_\nu^0(t)\chi_{[2^k,2^{k+1})}(|x|)w(x,t)^{p/2}\big\|_{L^2}.
\end{equation}
Then we get
\begin{align}\label{eop}
\nonumber\bigg| \bigg\langle \int_{\mathbb{R}} \Big(e^{i(t-s)(-\Delta)^{a/2}}&P_0^2 (\phi_\nu F_j)(\cdot,s)\Big)(x)ds, \phi_\nu^0 G_k(x,t) \bigg\rangle \bigg|\\
\nonumber\lesssim &2^{-\frac{n-1}{2}(j+k)}
\big\| \phi_\nu(t) \chi_{[2^j,2^{j+1})}(|x|) w^{p/2} \big\|_{L^2}
\big\| \phi_\nu F  \big\|_{L^2(w^{-p})}\\
&\qquad\times\big\| \phi_\nu^0(t) \chi_{[2^k,2^{k+1})}(|x|) w^{p/2} \big\|_{L^2}
\big\| \phi_\nu^0 G  \big\|_{L^2(w^{-p})}.
\end{align}
Since we are assuming $|j-k|\leq1$, $R=\max(2^j,2^k)=C2^j$.
Hence we see that
    \begin{align}\label{case:l=0_j=k_2}
    \nonumber\big\| \phi_\nu(t)\chi_{[2^j,2^{j+1})}(|x|)w(x,t)^{p/2}\big\|_{L^2}^2
    &=\int_{\nu-2C2^j}^{\nu+2C2^j}\int_{|x|\in [2^j,2^{j+1})} w(x,t)^p dxdt\\
    \nonumber&\leq \int_{\nu-2C2^{aj}}^{\nu+2C2^{aj}} \int_{|x|\in [2^j,2^{j+1})} w(x,t)^p dxdt \\
    &\leq C2^{j(n+a -\beta p)} \| w \|_{\mathfrak{L}^{\beta,p}_{a-par}}^p
    \end{align}
from the definition of the $a$-parabolic Morrey-Campanato class.
Similarly,
\begin{equation}\label{case:l=0_j=k_3}
    \big\| \phi_\nu^0(t) \chi_{[2^k,2^{k+1})}(|x|) w(x,t)^{p/2} \big\|_{L^2}^2
    \leq 2^{k(n+a -\beta p)} \| w \|_{\mathfrak{L}^{\beta,p}_{a-par}}^p.
    \end{equation}
By combining \eqref{eop}, \eqref{case:l=0_j=k_2} and \eqref{case:l=0_j=k_3},
it follows now that
\begin{align*}
\bigg| \bigg\langle \int_{\mathbb{R}}& \Big(e^{i(t-s)(-\Delta)^{a/2}}P_0^2 (\phi_\nu F_j)(\cdot,s)\Big)(x)ds, \phi_\nu^0 G_k(x,t) \bigg\rangle \bigg|\\
&\lesssim 2^{-\frac{n-1}{2}(j+k)}
2^{\frac12(j+k)(n+a -\beta p)}\| w \|_{\mathfrak{L}^{\beta,p}_{a-par}}^p
\big\| \phi_\nu F  \big\|_{L^2(w^{-p})}\big\| \phi_\nu^0 G  \big\|_{L^2(w^{-p})}.
\end{align*}
Consequently, we get the desired bound
\begin{align}\label{eop3}
\nonumber\sum_{\nu \in R \mathbb{Z}}\bigg| \bigg\langle \int_{\mathbb{R}} \Big(e^{i(t-s)(-\Delta)^{a/2}}&P_0^2 (\phi_\nu F_j)(\cdot,s)\Big)(x)ds, \phi_\nu^0 G_k(x,t) \bigg\rangle \bigg|\\
\lesssim &2^{(a+1-\beta p)(j+k)/2}
\|w\|_{\mathfrak{L}^{\beta,p}_{a-par}}^p\|F\|_{L^2(w^{-p})}\|G\|_{L^2(w^{-p})}
\end{align}
using the Cauchy-Schwarz inequality in $\nu$ with the trivial estimates
\begin{equation}\label{131}
\sum_{\nu \in R \mathbb{Z}}\big\|\phi_\nu F \big\|_{L^2(w^{-p})}^2
\leq C\|F\|_{L^2(w^{-p})}^2
\end{equation}
and
\begin{equation}\label{1312}
\sum_{\nu \in R \mathbb{Z}}\big\|\phi_\nu^0 G \big\|_{L^2(w^{-p})}^2
\leq C\|G\|_{L^2(w^{-p})}^2.
\end{equation}
By \eqref{Lemma_case:l>1} and \eqref{eop3}, we obtain \eqref{inter_3'}.

Now we have to show the bound \eqref{inter_3} for \eqref{bound}.
For simplicity, we will consider the case $j\geq k$ only,
because the other case $j\leq k$ can be shown clearly in the same way.
From \eqref{interpolation_value}, we first see that
    \begin{align*}
    \bigg| \bigg\langle \int_{\mathbb{R}} \Big(e^{i(t-s)(-\Delta)^{a/2}}P_0^2 (\phi_\nu F_j)&(\cdot,s)\Big)(x)ds, \phi_\nu^0 G_k(x,t) \bigg\rangle \bigg|\\
    &\lesssim 2^{(j+k)(-\frac{2n-1}{4})} 2^{-\frac{1}{4}|j-k|}
    \big\| \phi_\nu F_j \big\|_{L^1} \big\| \phi_\nu^0 G_k \big\|_{L^1}.
    \end{align*}
Then by \eqref{case:l=0_Holder2} and \eqref{case:l=0_Holder23}, it follows that
  \begin{align}\label{sfk}
\nonumber\bigg| \bigg\langle \int_{\mathbb{R}}& \Big(e^{i(t-s)(-\Delta)^{a/2}}P_0^2 (\phi_\nu F_j)(\cdot,s)\Big)(x)ds, \phi_\nu^0 G_k(x,t) \bigg\rangle \bigg|\\
\nonumber&\lesssim 2^{(j+k)(-\frac{2n-1}{4})} 2^{-\frac{1}{4}|j-k|}
\big\| \phi_\nu(t) \chi_{[2^j,2^{j+1})}(|x|) w^{p/2} \big\|_{L^2}
\big\| \phi_\nu F  \big\|_{L^2(w^{-p})}\\
&\qquad\times\big\| \phi_\nu^0(t) \chi_{[2^k,2^{k+1})}(|x|) w^{p/2} \big\|_{L^2}
\big\| \phi_\nu^0 G  \big\|_{L^2(w^{-p})}.
\end{align}
Since $R=\max(2^j,2^k)=2^j$, we see that
\begin{equation}\label{case:l=0_j=k_1}
\big\| \phi_\nu(t)\chi_{[2^j,2^{j+1})}(|x|) w(x,t)^{p/2}\big\|_{L^2}^2
\leq C2^{j(n+a -\beta p)}\|w\|_{\mathfrak{L}^{\beta,p}_{a-par}}^p
\end{equation}
as in \eqref{case:l=0_j=k_2}.
Now we claim that
\begin{equation}\label{qsc}
\big\|\phi_\nu^0(t)\chi_{[2^k,2^{k+1})}(|x|)w(x,t)^{p/2} \big\|_{L^2}^2\leq
C2^{\max(0,j-ak)}2^{k(n+a-\beta p)}\|w\|_{\mathfrak{L}^{\beta,p}_{a-par}}^p.
\end{equation}
Indeed, when $j-ak\geq0$,
    \begin{align*}
    \big\| \phi_\nu^0(t) \chi_{[2^k,2^{k+1})}(|x|) w(x,t)^{p/2} &\big\|_{L^2}^2
=\int_{\nu}^{\nu+2^j} \int_{|x|\in [2^k,2^{k+1})} w(x,t)^p dxdt\\
    &\leq \sum_{m=0}^{\textbf{[}2^{j-ak}-1\textbf{]}} \int_{\nu+m2^{ak}}^{\nu+(m+1)2^{ak}} \int_{|x|\in [2^k,2^{k+1})} w(x,t)^p dxdt \\
    &\leq C2^{j-ak} 2^{k(n+a -\beta p)} \| w \|_{\mathfrak{L}^{\beta,p}_{a-par}}^p,
    \end{align*}
where $\textbf{[}2^{j-ak}-1\textbf{]}$ denotes the least integer
greater than or equal to $2^{j-ak}-1$.
On the other hand, when $j-ak\leq0$,
    \begin{align*}
    \big\| \phi_\nu^0(t) \chi_{[2^k,2^{k+1})}(|x|) w(x,t)^{p/2} \big\|_{L^2}^2
    &= \int_{\nu}^{\nu+2^j} \int_{|x|\in [2^k,2^{k+1})} w(x,t)^p dxdt \\
    &\leq \int_{\nu}^{\nu+2^{ak}} \int_{|x|\in [2^k,2^{k+1})} w(x,t)^p dxdt \\
    &\leq C2^{k(n+a -\beta p)} \| w \|_{\mathfrak{L}^{\beta,p}_{a-par}}^p.
    \end{align*}
The claim \eqref{qsc} is proved.
By combining \eqref{sfk}, \eqref{case:l=0_j=k_1} and \eqref{qsc}, it follows now that
\begin{align*}
\bigg|\bigg\langle\int_{\mathbb{R}} \Big(e^{i(t-s)(-\Delta)^{a/2}}&P_0^2(\phi_\nu F_j)(\cdot,s)\Big)(x)ds, \phi_\nu^0 G_k(x,t)\bigg\rangle\bigg|\\
&\qquad\lesssim 2^{(\frac{2a+1}{4}-\frac{\beta p}{2})(j+k)}2^{-\frac{1}{4}(j-k)}
2^{\max(0,\frac12(j-ak))}\\
&\qquad\qquad\times\|w\|_{\mathfrak{L}^{\beta,p}_{a-par}}^p\big\|\phi_\nu F\big\|_{L^2(w^{-p})}\big\|
\phi_\nu^0G \big\|_{L^2(w^{-p})}.
\end{align*}
Using the Cauchy-Schwarz inequality in $\nu$ with \eqref{131} and \eqref{1312},
we get
\begin{align*}
\sum_{\nu \in R \mathbb{Z}}\bigg|\bigg\langle\int_{\mathbb{R}} \Big(e^{i(t-s)(-\Delta)^{a/2}}&P_0^2(\phi_\nu F_j)(\cdot,s)\Big)(x)ds, \phi_\nu^0 G_k(x,t)\bigg\rangle\bigg|\\
&\quad\lesssim 2^{(\frac{2a+1}{4}-\frac{\beta p}{2})(j+k)}2^{-\frac{1}{4}(j-k)}
2^{\max(0,\frac12(j-ak))}\\
&\quad\qquad\times\|w\|_{\mathfrak{L}^{\beta,p}_{a-par}}^p\|F\|_{L^2(w^{-p})}\|G\|_{L^2(w^{-p})}
\end{align*}
as desired.

\begin{proof}[Proof of \eqref{Lemma_case:l>1}]
It remains to show the estimate \eqref{Lemma_case:l>1}.
First we write
    \begin{align*}
\int_{\mathbb{R}}\Big(e^{i(t-s)(-\Delta)^{a/2}}&P_0^2 (\phi_\nu^l F_j)(\cdot,s)\Big)(x)ds \\
    &= \int_{\mathbb{R}}\int_{\mathbb{R}^{n}}
        \bigg( \int_{\mathbb{R}^n} e^{i(x-y)\cdot\xi +i(t-s)|\xi|^a} \psi^2(\xi) d\xi \bigg)
        \phi_\nu^l F_j(y,s) dy ds.
    \end{align*}
From the support of $\phi_\nu^0 G_k$ and $\phi_\nu^l F_j$,
we may assume that $|x|\sim 2^k$, $|y|\sim 2^j$ and $|t-s|\sim 2^l R$ since $l\geq2$.
Then by the integration by parts, we easily see that
    \begin{equation}\label{use}
    \bigg| \int_{\mathbb{R}^n} e^{i(x-y)\cdot\xi +i(t-s)|\xi|^a} \psi^2(\xi) d\xi\bigg|
    \leq C_N (2^l R)^{-N}
    \end{equation}
for a sufficiently large number $N>0$.
Using this, we now get
    \begin{align}\label{case:l>1}
\nonumber\bigg| \bigg\langle \int_{\mathbb{R}} \Big(e^{i(t-s)(-\Delta)^{a/2}}&P_0^2 (\phi_\nu^l F_j)(\cdot,s)\Big)(x)ds, \phi_\nu^0 G_k(x,t) \bigg\rangle \bigg|\\
\nonumber&\leq \bigg\| \int_{\mathbb{R}} e^{i(t-s)(-\Delta)^{a/2}}P_0^2 (\phi_\nu^l F_j)(\cdot,s)ds \bigg\|_{L^{\infty}} \big\| \phi_\nu^0 G_k \big\|_{L^1}\\
    &\leq C_N (2^l R)^{-N}
        \big\| \phi_\nu^l F_j \big\|_{L^1} \big\| \phi_\nu^0 G_k \big\|_{L^1} .
    \end{align}
Next, by H\"older's inequality we note that
    \begin{align*}
    \big\| \phi_\nu^l F_j \big\|_{L^1}
    &= \iint \phi_\nu^l(t)\chi_{[2^{j-1},2^j)}(|x|)|F(x,t)|w(x,t)^{-p/2} w(x,t)^{p/2} dxdt \\
    &\leq \big\| \phi_\nu^l F  \big\|_{L^2(w^{-p})}
        \bigg( \int w(x,t)^p \phi_\nu^l(t) \chi_{B(0,2^{j+1})}(x) dxdt \bigg)^{1/2},
    \end{align*}
where $B(0,2^{j+1})$ denotes the ball in $\mathbb{R}^n$ centered at the origin with radius $2^{j+1}$.
Also, by the definition of $\mathfrak{L}^{\beta,p}_{a-par}$,
    \begin{align*}
    \int w(x,t)^p \phi_\nu^l(t) \chi_{B(0,2^{j+1})}(x) dxdt
    &\leq \int_{|t-\nu| \leq (2^l R)^a} \int_{|x| \leq 2^l R} w(x,t)^p dxdt \\
    &\leq C(2^l R)^{n+a -\beta p} \| w \|_{\mathfrak{L}^{\beta,p}_{a-par}}^p.
    \end{align*}
Hence it follows that
\begin{equation}\label{55}
    \big\| \phi_\nu^l F_j \big\|_{L^1}
    \leq C(2^l R)^{(n+a-\beta p)/2}\| w \|_{\mathfrak{L}^{\beta,p}_{a-par}}^{p/2}\big\| \phi_\nu^l F  \big\|_{L^2(w^{-p})}.
\end{equation}
Similarly,
\begin{equation}\label{552}
    \big\| \phi_\nu^0 G_k \big\|_{L^1}
    \leq CR^{(n+a-\beta p)/2}\|w\|_{\mathfrak{L}^{\beta,p}_{a-par}}^{p/2}\big\|\phi_\nu^0G \big\|_{L^2(w^{-p})}.
\end{equation}
Combining \eqref{case:l>1}, \eqref{55} and \eqref{552}, we conclude that
 \begin{align*}
\bigg| \bigg\langle \int_{\mathbb{R}}&\Big(e^{i(t-s)(-\Delta)^{a/2}}P_0^2 (\phi_\nu^l F_j)(\cdot,s)\Big)(x)ds, \phi_\nu^0 G_k(x,t) \bigg\rangle \bigg|\\
    &\leq C_N (2^l R)^{-N}(2^{l/2}R)^{n+a-\beta p}
    \| w \|_{\mathfrak{L}^{\beta,p}_{a-par}}^p\big\| \phi_\nu^l F  \big\|_{L^2(w^{-p})}
\big\|\phi_\nu^0G \big\|_{L^2(w^{-p})}.
    \end{align*}
Using this and the Cauchy-Schwarz inequality as before,
we finally get
    \begin{align}\label{endd}
\nonumber&\sum_{\nu \in R \mathbb{Z}} \sum_{l=2}^{\infty}
        \bigg| \bigg\langle \int_{\mathbb{R}} \Big(e^{i(t-s)(-\Delta)^{a/2}}P_0^2 (\phi_\nu^l F_j)(\cdot,s)\Big)(x)ds, \phi_\nu^0 G_k(x,t) \bigg\rangle \bigg| \\
\nonumber&\quad\leq C_N \sum_{l=2}^{\infty}
        (2^{l/2}R)^{n+a -\beta p} (2^l R)^{-N}
        \| w \|_{\mathfrak{L}^{\beta,p}_{a-par}}^p
        \sum_{\nu \in R \mathbb{Z}}  \big\| \phi_\nu^l F \big\|_{L^2(w^{-p})} \big\| \phi_\nu^0 G \big\|_{L^2(w^{-p})} \\
&\quad\lesssim C_N R^{n+a-\beta p-N}
        \| w \|_{\mathfrak{L}^{\beta,p}_{a-par}}^p \| F \|_{L^2(w^{-p})} \| G \|_{L^2(w^{-p})}.
    \end{align}
Here, to apply the Cauchy-Schwarz inequality, we have used the following trivial estimates:
    \begin{equation*}
    \sum_{\nu \in R \mathbb{Z}} \big\| \phi_\nu^l F \big\|_{L^2(w^{-p})}^2
    \leq C2^l \| F \|_{L^2(w^{-p})}^2
    \end{equation*}
and
 \begin{equation*}
    \sum_{\nu \in R \mathbb{Z}} \big\| \phi_\nu^0 G \big\|_{L^2(w^{-p})}^2
    \leq C\| G \|_{L^2(w^{-p})}^2.
    \end{equation*}
Since $N$ is sufficiently large and $R=\max(2^j,2^k)\geq2^{(j+k)/2}$,
\eqref{endd} implies directly the estimate \eqref{Lemma_case:l>1}.
\end{proof}


\subsection{Proof of Lemma \ref{local_prop}}\label{sec4.2}

Here we prove Lemma \ref{local_prop}.
First, we show \eqref{interpolation_value_3} and \eqref{interpolation_value},
and then we show \eqref{interpolation_value_2}.

\subsubsection{Proofs of \eqref{interpolation_value_3} and \eqref{interpolation_value}}
Let us first consider
$x= rx'$, $y=\lambda y'$ and $\xi=\rho\xi'$ for $x', y', \xi' \in S^{n-1}$,
where $r = |x|$, $\lambda=|y|$ and $\rho = |\xi|$.
Recall the fact\footnote{Here, $\sigma$ is the measure induced by the Lebesgue measure on $S^{n-1}$ and $J_m$ denotes the Bessel function with order $m$.} (see \cite{St}, p. 347) that
    \begin{equation}\label{spheri}
    \int_{S^{n-1}} e^{-ir\rho x'\cdot\xi'} d\sigma(x')=c_n (r\rho)^{-\frac{n-2}{2}} J_{\frac{n-2}{2}} (r\rho).
    \end{equation}
Using this as in (27) of \cite{CKS} and setting $\widetilde{F_j}(\lambda,s):=F_j(\lambda y',s)$,\footnote{Note here that $F_j(\lambda y',s)$ is independent of $y'\in S^{n-1}$
since $F_j$ is a radial function in the $x$ variable.}
one can easily see that
    \begin{equation}\label{0907}
    \chi_{I_k}(|x|) \int_{\mathbb{R}} \Big(e^{i(t-s) (-\Delta)^{a/2}} P_0^2 F_j(\cdot,s)\Big)(x) ds =
    \iint K_{jk}(r,\lambda,t-s) \lambda^{n-1} \widetilde{F_j}(\lambda,s) d\lambda ds
   \end{equation}
with $K_{jk}(r,\lambda,t)$, $j,k\geq0$, which is given as
$$
K_{jk}(r,\lambda,t)
=\frac{\chi_{I_k}(r)}{r^{\frac{n-2}{2}}}\frac{\chi_{I_j}(\lambda)}{\lambda^{\frac{n-2}{2}}}
\int e^{it\rho^{a}} \overline{J_{\frac{n-2}{2}}(r\rho)} J_{\frac{n-2}{2}}(\lambda\rho)\rho\psi(\rho)^2d\rho
$$
where $I_0=(0,1)$, and for $j,k\geq1$, $I_k=[2^{k-1},2^k)$ and $I_j=[2^{j-1},2^j)$.
Since
\begin{align*}
\bigg|\bigg\langle \int_{\mathbb{R}} \Big(&e^{i(t-s)(-\Delta)^{a/2}} P_0^2 F_j(\cdot,s)\Big)(x)ds, G_k(x,t)\bigg\rangle\bigg|\\
&\qquad\qquad\leq\sup_{r,t} \bigg| \iint K_{jk}(r,\lambda,t-s) \lambda^{n-1} \widetilde{F_j}(\lambda,s) d\lambda ds \bigg|\| G_k \|_{L^1_{x,t}} \\
&\qquad\qquad\leq \sup_{r,\lambda,t} |K_{jk}(r,\lambda,t)| \big\| \lambda^{n-1} \widetilde{F_j}(\lambda,s) \big\|_{L^1_{\lambda,s}}\| G_k \|_{L^1_{x,t}} \\
&\qquad\qquad\leq C \| K_{jk} \|_{L^\infty_{r,\lambda,t}} \| F_j \|_{L^1_{x,t}}\| G_k \|_{L^1_{x,t}},
\end{align*}
we are now reduced to showing that
    \begin{equation}\label{reduce}
    \| K_{jk} \|_{L^\infty_{r,\lambda,t}}
    \lesssim
    \begin{cases}
    2^{-\frac{n-1}{2}(j+k)},\\
    2^{-\frac{2n-1}{4}(j+k)}2^{-\frac{1}{4}|j-k|}\quad\text{particularly if}\quad|j-k|>1.
    \end{cases}
    \end{equation}

First we show the first bound in \eqref{reduce}.
For $n\geq2$, we see that
    \begin{equation}\label{Bessel_Grafakos}
    |J_{\frac{n-2}{2}}(r)|
    \leq C \min\{ r^{\frac{n-2}{2}}, r^{-\frac{1}{2}} \}
    \end{equation}
using the following known estimates for Bessel functions $J_{\nu}(r)$ (see \cite{G}, pp. 429-431):
For $\textrm{Re}\,\nu>-1/2$
   $$
    |J_{\nu}(r)|
    \lesssim
    \begin{cases}
    C_\nu r^{\textrm{Re}\,\nu}\quad\text{if}\quad 0<r\leq1,\\
    C_\nu r^{-1/2}\quad\text{if}\quad r\geq1.
    \end{cases}
    $$
Hence it follows from \eqref{Bessel_Grafakos} that
    \begin{align*}
    |K_{jk}(r,\lambda,t)|
    &\leq \frac{\chi_{I_k}(r)}{ r^{\frac{n-2}{2}} } \frac{\chi_{I_j}(\lambda)}{ \lambda^{\frac{n-2}{2}} }
        \int |J_{\frac{n-2}{2}} (r\rho)| |J_{\frac{n-2}{2}} (\lambda\rho)| \rho\phi(\rho)^2 d\rho \\
    &\lesssim \chi_{I_k}(r) \chi_{I_j}(\lambda)
    \min\{ 1, r^{-\frac{n-1}{2}} \} \min\{ 1, \lambda^{-\frac{n-1}{2}} \}.
    \end{align*}
From the supports of $\chi_{I_k}$ and $\chi_{I_j}$, this implies now the desired bound.

Now we turn to \eqref{reduce} for the case $|j-k|>1$.
We divide cases into the case $j,k\geq1$ and the case where $j=0$ or $k=0$.

\medskip

\noindent\textbf{The case $j,k\geq1$ when $|j-k|>1$.}
In this case, we will decompose $K_{jk}$ into four parts
based on the following asymptotic expansion of Bessel functions (see Lemma 3.4 in \cite{CKS}).
We also refer the reader to \cite{W} for the theory of Bessel functions.

\begin{lem}\label{bessel}
For $r>1$ and $Re\,\nu>-1/2$,
\begin{equation}\label{dembes}
J_{\nu}(r)=\frac{\sqrt{2}}{\sqrt{\pi r}}\cos(r-\frac{\nu\pi}2-\frac{\pi}4)
-\frac{(\nu-\frac12)\Gamma(\nu+\frac32)}{(2\pi)^{\frac12}(r)^{\frac32}\Gamma(\nu+\frac12)}
\sin(r-\frac{\nu\pi}2-\frac{\pi}4)+E_{\nu}(r),
\end{equation}
where
\begin{equation}\label{Bessel_error_1}
|E_{\nu}(r)|\leq
C_\nu r^{-\frac52}
\end{equation}
and
\begin{equation}\label{Bessel_error_2}
|\frac{d}{dr}E_{\nu}(r)|\leq C_\nu
(r^{-\frac52}+r^{-\frac72}).
\end{equation}
\end{lem}

Indeed, using this lemma as in (46) of \cite{CKS},
we may write
$$J_{\frac{n-2}{2}}(\lambda\rho) \overline{J_{\frac{n-2}{2}}(r\rho)} = \sum_{l=1}^4J_l(r,\lambda,\rho),$$
where
 \begin{align*}
J_1(r,\lambda,\rho)&=
(c_{n}(\lambda\rho)^{-\frac{1}{2}}+c_{n}(\lambda\rho)^{-\frac{3}{2}})
e^{\pm i\lambda\rho}(c_{n}(r\rho)^{-\frac{1}{2}}+c_{n}(r\rho)^{-\frac{3}{2}} )e^{\pm ir\rho},\\ J_2(r,\lambda,\rho)&=
(c_{n}(\lambda\rho)^{-\frac{1}{2}}+c_{n}(\lambda\rho)^{-\frac{3}{2}})
e^{\pm i\lambda\rho}E_{\frac{n-2}{2}}(r\rho),\\
J_3(r,\lambda,\rho)&=
(c_{n}(r\rho)^{-\frac{1}{2}}+c_{n}(r\rho)^{-\frac{3}{2}})e^{\pm ir\rho}E_{\frac{n-2}{2}}(\lambda\rho),\\
J_4(r,\lambda,\rho)&=E_{\frac{n-2}{2}}(\lambda\rho)E_{\frac{n-2}{2}}(r\rho).
\end{align*}
From this, $K_{jk}$ is now decomposed as $K_{jk}=\sum_{l=1}^4 K_{jk,l}$, with
$$K_{jk,l}(r,\lambda,t)=
\frac{\chi_{I_k}(r)}{r^{\frac{n-2}{2}}}
\frac{\chi_{I_j}(\lambda)}{\lambda^{\frac{n-2}{2}}}
\int e^{it\rho^{a}}J_l(r,\lambda,\rho)\rho\psi(\rho)^2d\rho.$$
Then we only need to show
\begin{equation}\label{mainbes}
\|K_{jk,l}\|_{L^{\infty}_{r,\lambda,t} }
    \lesssim 2^{-j\frac{2n-1}{4}}2^{-k\frac{2n-1}{4}}2^{-\frac{1}{4}|j-k|}
\end{equation}
for $l=1,2,3,4$.

\begin{rem}
As shown below, although the cases $l=2,3,4$, which follow from the error term $E_{\nu}(r)$ in the asymptotic expansion \eqref{dembes} of the Bessel function, would give a better bound than \eqref{mainbes},
the bound for the first case $l=1$, which follows from the first term in the expansion, dominates those better bounds.
But, if we use the usual expansion of the Bessel function having the error term on and after the second term,
the error estimates like \eqref{Bessel_error_1} and \eqref{Bessel_error_2} are not enough so that the first case dominates the other cases.
This is the reason why we compute the second term and have the terms on and after the third term as the error term in Lemma \ref{bessel} .
\end{rem}

For $l=4$, it follows easily from \eqref{Bessel_error_1} that
    $$
    \|K_{jk,4}\|_{L^{\infty}_{r,\lambda,t} }
    \lesssim 2^{-j\frac{n+3}{2}}2^{-k\frac{n+3}{2}}
    \leq 2^{-j\frac{2n-1}{4}}2^{-k\frac{2n-1}{4}}2^{-\frac{1}{4}|j-k|}.
    $$

Next, for $l=1$, we may show the desired bound for
\begin{equation}\label{firstbes}
\widetilde{K}_{jk,1}(r,\lambda,t)
= \frac{\chi_{I_k}(r)}{ r^{\frac{n-1}{2}} } \frac{\chi_{I_j}(\lambda)}{\lambda^{\frac{n-1}{2}} }
\int e^{it\rho^a\pm i\lambda\rho \pm ir\rho}\psi(\rho)^2d\rho,
\end{equation}
since the factors $(\lambda\rho)^{-\frac32}$ and $(r\rho)^{-\frac32}$ in $J_1$
would give a better boundedness
than $(\lambda\rho)^{-\frac12}$ and $(r\rho)^{-\frac12}$, respectively.
Now, applying Lemma \ref{VanCor+IntPart} with $\Phi(\rho)=\psi^2(\rho)$
and $R=|\lambda\pm r| \sim 2^{\max(j,k)}$ since $|j-k|>1$ and $j,k\geq1$, we get
    \begin{align*}
    \big| \widetilde{K}_{jk,1}(r,\lambda,t) \big|
    &\lesssim \frac{\chi_{I_k}(r)}{ r^{\frac{n-1}{2}} } \frac{\chi_{I_j}(\lambda)}{\lambda^{\frac{n-1}{2}}}\,2^{-\frac{1}{2}\max(j,k)}\\
    &\sim 2^{-j\frac{2n-1}{4}}2^{-k\frac{2n-1}{4}}2^{-\frac{1}{4}|j-k|},
    \end{align*}
as desired.

It remains to bound $K_{jk,2}$ and $K_{jk,3}$.
We shall consider only for $K_{jk,2}$
because the same argument used for $K_{jk,2}$ works clearly for $K_{jk,3}$.
Since the factor $(\lambda\rho)^{-\frac32}$ in $J_2$ would give a better boundedness
than $(\lambda\rho)^{-\frac12}$, we only need to show the desired bound for
    $$
    \widetilde{K}_{jk,2}(r,\lambda,t)
    = \frac{\chi_{I_k}(r)}{ r^{\frac{n-2}{2}} } \frac{\chi_{I_j}(\lambda)}{ \lambda^{\frac{n-1}{2}} }
        \int e^{it\rho^a\pm i\lambda\rho}E_{\frac{n-2}{2}}(r\rho) \rho^{\frac12}\psi(\rho)^2d\rho.
    $$
Applying Lemma \ref{VanCor+IntPart} with $R= |\lambda| \sim 2^j$
and $\Phi(\rho)= E_{\frac{n-2}{2}}(r\rho) \rho^{\frac12}\psi^2(\rho)$,
we get
$$
\big|\widetilde{K}_{jk,2}(r,\lambda,t) \big|
\lesssim \frac{\chi_{I_k}(r)}{ r^{\frac{n-2}{2}} } \frac{\chi_{I_j}(\lambda)}{ \lambda^{\frac{n-1}{2}} }
\,2^{-\frac{j}{2}} \big( \| \Phi \|_{L^\infty} + \| \Phi' \|_{L^\infty} \big).
$$
Using \eqref{Bessel_error_1} and \eqref{Bessel_error_2}, we notice here
that $\| \Phi \|_{L^\infty} + \|\Phi'\|_{L^\infty}\lesssim r^{-\frac{3}{2}}$
which follows from
    \begin{equation*}
    | E_{\frac{n-2}{2}} (r\rho) | \lesssim r^{-\frac{5}{2}}
    \end{equation*}
    and
    $$
    \Big| \frac{d}{d\rho} E_{\frac{n-2}{2}} (r\rho) \Big|
    =\Big| \frac{d}{dv} E_{\frac{n-2}{2}} (v)\frac{dv}{d\rho} \Big|
     \lesssim((r\rho)^{-5/2}+(r\rho)^{-7/2})r
    \lesssim r^{-\frac{3}{2}}.
    $$
Thus, we get
    \begin{align*}
    \big\|\widetilde{K}_{jk,2}\big\|_{L^{\infty}_{r,\lambda,t}}
    &\lesssim 2^{-j\frac{n-1}{2}}2^{-k\frac{n-2}{2}}2^{-\frac{j}{2}}2^{-k\frac{3}{2}}\\
    &\leq 2^{-j\frac{2n-1}{4}}2^{-k\frac{2n-1}{4}}2^{-\frac{1}{4}|j-k|}.
    \end{align*}

\medskip

\noindent\textbf{The case where $j=0$ or $k=0$ when $|j-k|>1$.}
In this case, we will use the following known fact (see \cite{G}, p. 426):
For $0\leq r<1$ and $Re\,\nu>-1/2$,
    \begin{equation}\label{Besse}
    |J_{\nu} (r)| \leq C_{\nu} r^{\nu}
    \quad\text{and}\quad
    \big|\frac{d}{dr}J_{\nu} (r)\big| \leq C_{\nu} r^{\nu-1}.
    \end{equation}
We consider only the case where $k=0$ and $j\geq1$
since the other case where $j=0$ and $k\geq1$ follows clearly from the same argument.
Now we have to show that for $j\geq1$
\begin{equation}\label{inft}
\|K_{j0}(r,\lambda,t)\|_{L^\infty_{r,\lambda,t}} \lesssim 2^{-j\frac{n}{2}}
\end{equation}
where
$$K_{j0}(r,\lambda,t)
=\frac{\chi_{I_0}(r)}{r^{\frac{n-2}{2}}}\frac{\chi_{I_j}(\lambda)}{\lambda^{\frac{n-2}{2}}}
\int e^{it\rho^{a}} \overline{J_{\frac{n-2}{2}}(r\rho)} J_{\frac{n-2}{2}}(\lambda\rho)\rho\psi(\rho)^2d\rho.$$
Notice from Lemma \ref{bessel} that
    \begin{equation}\label{jj}
    J_{\frac{n-2}{2}}(\lambda \rho)=
    (c_{n}(\lambda\rho)^{-\frac{1}{2}}+c_{n}(\lambda\rho)^{-\frac{3}{2}})e^{\pm i\lambda\rho}
    +E_{\frac{n-2}{2}} (\lambda\rho).
    \end{equation}
By \eqref{Bessel_error_1} and \eqref{Besse},
the part of $K_{j0}$ coming from $E_{\frac{n-2}{2}}(\lambda\rho)$ in $\eqref{jj}$ is bounded as follows:
\begin{equation*}
\bigg|\frac{\chi_{I_0}(r)}{r^{\frac{n-2}{2}}}\frac{\chi_{I_j}(\lambda)}{\lambda^{\frac{n-2}{2}}}
\int e^{it\rho^{a}} \overline{J_{\frac{n-2}{2}}(r\rho)} E_{\frac{n-2}{2}}(\lambda\rho)\rho\psi(\rho)^2 d\rho\bigg|\lesssim2^{-j\frac{n+3}{2}}\leq2^{-j\frac{n}{2}}.
\end{equation*}
Now we may consider only the part of $K_{j0}$ coming from $(\lambda\rho)^{-\frac{1}{2}}$,
because the factor $(\lambda\rho)^{-\frac32}$ in $\eqref{jj}$ would give a better boundedness
than $(\lambda\rho)^{-\frac12}$.
Namely, we have to show the bound \eqref{inft} for
    $$
    \widetilde{K}_{j0}(r,\lambda,t)
    =\frac{\chi_{I_0}(r)}{r^{\frac{n-2}{2}}}\frac{\chi_{I_j}(\lambda)}{\lambda^{\frac{n-1}{2}}}
    \int e^{it\rho^{a}\pm i\lambda\rho}
    \overline{J_{\frac{n-2}{2}}(r\rho)} \rho^{\frac12}\psi(\rho)^2d\rho.
    $$
Applying Lemma \ref{VanCor+IntPart} with $R= |\lambda| \sim 2^j$ and
$\Phi(\rho)= J_{\frac{n-2}{2}}(r\rho) \rho^{\frac12}\psi^2(\rho)$, and by \eqref{Besse},
we then get
$$\big|\widetilde{K}_{j0}(r,\lambda,t) \big|
    \lesssim \frac{\chi_{I_0}(r)}{r^{\frac{n-2}{2}}}\frac{\chi_{I_j}(\lambda)}{\lambda^{\frac{n-1}{2}}}
    \,2^{-\frac{j}{2}} \big( \| \Phi \|_{L^\infty} + \|\Phi' \|_{L^\infty} \big)
    \lesssim2^{-j\frac{n}{2}},$$
as desired.

\subsubsection{Proof of \eqref{interpolation_value_2}}
To show \eqref{interpolation_value_2}, by H\"older's inequality,
it is enough to show that for $j,k\geq0$
$$\Big\| \chi_{I_k}(|x|) \int_{\mathbb{R}} \Big(e^{i(t-s)(-\Delta)^{a/2}}P_0^2 F_j(\cdot,s)\Big)(x)ds \Big\|_{L^2_{x,t}}
\lesssim2^{\frac{1}{2}(j+k)} \| F_j \|_{L^2_{x,t}}.$$
For this, we consider the operators $T_k$, $k\geq0$, defined for $r>0$ and $t\in\mathbb{R}$ by
    $$
    T_k h(r,t)=\chi_{I_k}(r) r^{-\frac{n-2}2} \int_0^\infty e^{it\rho^{a}} \overline{J_{\frac{n-2}2}(r\rho)} \varphi(\rho)h(\rho) d\rho,
    $$
where $\varphi(\rho)^2=\rho\psi(\rho)^2$.
Then the adjoint operator $T^*_{k}$ of $T_{k}$ is given for $\rho>0$ by
    $$
    T^{*}_{k} H (\rho)
    = \varphi (\rho) \int e^{-is\rho^{a}} \int \chi_{I_k}(\lambda)\lambda^{-\frac{n-2}{2}}
    J_{\frac{n-2}{2}} (\lambda\rho) H(\lambda,s) d\lambda ds,
    $$
and so
\begin{align*}
&T_kT^{*}_{j} (\lambda^{n-1}H) (r,t)\\
&\quad=\frac{\chi_{I_k}(r)}{r^{\frac{n-2}{2}}} \iint \frac{\chi_{I_j}(\lambda)}{\lambda^{\frac{n-2}{2}}}
\bigg( \int e^{i(t-s)\rho^{a}} \overline{J_{\frac{n-2}{2}} (r\rho)} J_{\frac{n-2}{2}} (\lambda\rho) \varphi^2 (\rho) d\rho \bigg)
\lambda^{n-1} H(\lambda,s) d\lambda ds.
\end{align*}
Then, by regarding $\rho\psi^2(\rho)$ and $F_j(\lambda y',s)$
as $\varphi^2(\rho)$ and $H(\lambda,s)$, respectively, in \eqref{0907},
Then, by we are reduced to showing that for $j,k\geq0$
$$\|T_kT^{*}_{j} (\lambda^{n-1}H)\|_{L_t^2\mathfrak{L}_r^2}
\lesssim2^{\frac{1}{2}(j+k)} \| H \|_{L_t^2\mathfrak{L}_r^2},$$
where $\mathfrak{L}_r^2 = L^2(r^{n-1}dr)$.
By the usual $TT^\ast$ argument, this follows from
    \begin{equation*}
    \| T_k h \|_{L_t^{2} \mathfrak{L}_r^{2}}\lesssim 2^{\frac{k}{2}} \| h \|_{L^2}
    \end{equation*}
for $k\geq0$.
To show this, by changing the variable $\rho$ to $\rho^{a}$, we first see that
    $$
    \int_0^\infty e^{it\rho^{a}} \overline{J_{\frac{n-2}2}(r\rho)} \varphi(\rho)h(\rho) d\rho
    =a^{-1}\int_0^\infty e^{it\rho} \overline{J_{\frac{n-2}2}(r\rho^{1/a})} \varphi(\rho^{1/a})h(\rho^{1/a}) \rho^{1/{a}-1}d\rho,
    $$
and so we get
\begin{align*}
\| T_k h \|_{L_t^2 \mathfrak{L}_r^2}
&= C\Big\| \chi_{I_k}(r) r^{-\frac{n-2}2} \overline{J_{\frac{n-2}2}(r\rho^{1/a})}
\varphi(\rho^{1/a})h(\rho^{1/a})\rho^{1/a-1}\Big\|_{\mathfrak{L}_r^2 L_\rho^2} \\
&=C\bigg(\int_{I_k}r^{-(n-2)}\int_{1/2}^2|J_{\frac{n-2}2}(r\rho)|^2|h(\rho)|^2\rho^{1-a}d\rho\,r^{n-1}dr \bigg)^{1/2}\\
 &\lesssim
    \bigg(\int_{I_k} r^{-(n-2)} \int_{1/2}^2 \min\big\{ (r\rho)^{n-2}, (r\rho)^{-1} \big\} |h(\rho)|^2 \rho^{1-a} d\rho\,r^{n-1}dr\bigg)^{1/2}\\
 &\lesssim 2^{k/2} \|h\|_{L^2},
\end{align*}
using Plancherel's theorem in $t$ and \eqref{Bessel_Grafakos}.


\subsection{Proof of \eqref{base_par}}\label{sec4.3}

Let us now show the inhomogeneous part \eqref{base_par} in Proposition \ref{prop_par}.
For this we show a stronger estimate
    \begin{equation}\label{eno}
    \bigg\|\int_{-\infty}^{t} e^{i(t-s)(-\Delta)^{a/2}} F(\cdot,s)ds\bigg\|_{L^2(w(x,t))}
    \leq C\|w\|_{\mathfrak{L}^{\beta,p}_{a-par}} \|F\|_{L^2(w(x,t)^{-1})}
    \end{equation}
which implies \eqref{base_par}.
Indeed, to deduce \eqref{base_par} from this,
first decompose the $L_t^2$ norm in the left-hand side of \eqref{base_par}
into two parts, $t\geq0$ and $t<0$. Then the latter can be reduced to the former
by a change of variables $t\mapsto-t$, and so we only need to consider the first part $t\geq0$.
But, since $[0,t)=(-\infty,t)\cap[0,\infty)$, by applying \eqref{eno} with $F$ replaced by $\chi_{[0,\infty)}(s)F$,
the first part follows directly, as desired.

To show \eqref{eno}, by duality we may show the following bilinear form estimate as before:
    \begin{equation*}
    \bigg|\bigg\langle\int_{-\infty}^{t} \Big(e^{i(t-s)(-\Delta)^{a/2}}P_0^2F(\cdot,s)\Big)(x)ds,G(x,t)\bigg\rangle\bigg|
    \leq C\|w\|_{\mathfrak{L}^{\beta,p}_{a-par}}\|F\|_{L^2(w^{-1})}\|G\|_{L^2(w^{-1})}.
    \end{equation*}
But, once we have Lemma \ref{local_prop}
replacing $\int_{\mathbb{R}}$ with $\int_{-\infty}^{t}$,
this estimate follows clearly by repeating the previous argument
used for the homogeneous part \eqref{freq_par}.
Since \eqref{reduce} is obviously valid for this replacement,
it does not affect the last two estimates in the lemma.
We only need to consider the first estimate \eqref{interpolation_value_2}.
For this we first modify it as
\begin{equation*}
\bigg|\bigg\langle\int_{-\infty}^{t} \Big(e^{i(t-s)(-\Delta)^{a/2}}P_0^2F_j(\cdot,s)\Big)(x)ds,G_k(x,t)\bigg\rangle\bigg|
\lesssim2^{(\frac{1}{2}+\varepsilon)(j+k)}\|F_j\|_{L^{2}_{x,t}}\|G_k\|_{L^{2}_{x,t}}
\end{equation*}
uniformly in $0<\varepsilon<1$.
Since $\varepsilon$ is arbitrary and may be sufficiently small,
it is not difficult to see that this modification is harmless in repeating the previous argument.
See Remark \ref{csreas} for the reason for this modification.

Now we show the above modified estimate.
Similarly as in \eqref{ell}, we may write
    \begin{align*}
    &\bigg\langle\int_{-\infty}^{t} \Big(e^{i(t-s)(-\Delta)^{a/2}}P_0^2F_j(\cdot,s)\Big)(x)ds, G_k(x,t)\bigg\rangle\\
    &=\sum_{\nu\in R\mathbb{Z}}
    \bigg\langle \int_{-\infty}^{t} \Big(e^{i(t-s)(-\Delta)^{a/2}}
    P_0^2\big([\phi_{\nu}^0+\sum_{l=1}^{\infty}(\phi_{\nu+}^l+\phi_{\nu-}^l)]F_j\big)(\cdot,s)\Big)(x)ds,\phi_\nu^0 G_k(x,t)\bigg\rangle.
    \end{align*}
As in \eqref{case:l>1}, we easily see that
for a sufficiently large number $N>0$,
    \begin{align}\label{sdga}
   \nonumber\sum_{\nu\in R\mathbb{Z}}\sum_{l=2}^{\infty} \bigg| \bigg\langle &\int_{-\infty}^{t} \Big(e^{i(t-s)(-\Delta)^{a/2}}P_0^2 (\phi_\nu^l F_j)(\cdot,s)\Big(x)ds, \phi_\nu^0 G_k(x,t) \bigg\rangle \bigg|\\
    \nonumber&\leq \sum_{\nu\in R\mathbb{Z}}\sum_{l=2}^{\infty} C_N (2^l R)^{-N}
        \big\| \phi_\nu^l F_j \big\|_{L^1_{x,t}} \big\| \phi_\nu^0 G_k \big\|_{L^1_{x,t}}\\
\nonumber&\lesssim\sum_{\nu\in R\mathbb{Z}}\sum_{l=2}^{\infty} C_N (2^l R)^{-N}2^{l/2}R2^{(j+k)n/2}
        \big\| \phi_\nu^l F_j \big\|_{L^2_{x,t}} \big\| \phi_\nu^0 G_k \big\|_{L^2_{x,t}}\\
    &\lesssim 2^{-\frac{N-n-1}{2}(j+k)}\sum_{l=2}^{\infty}2^{-(N-1)l}\sum_{\nu\in R\mathbb{Z}}
        \big\| \phi_\nu^l F_j \big\|_{L^2_{x,t}} \big\| \phi_\nu^0 G_k \big\|_{L^2_{x,t}},
 \end{align}
where $R=\max(2^j,2^k)\geq2^{(j+k)/2}$ and $\phi_\nu^l$ stands for $\phi_{\nu+}^l$ or $\phi_{\nu-}^l$.
Here we also used H\"older's inequality for the second inequality.
Next, using the Cauchy-Schwarz inequality together with the following trivial estimates
    \begin{equation*}
    \sum_{\nu \in R \mathbb{Z}} \big\| \phi_\nu^l F_j \big\|_{L^2}^2
    \leq C2^l \| F_j \|_{L^2}^2
\quad\text{and}\quad
    \sum_{\nu \in R \mathbb{Z}} \big\| \phi_\nu^0 G_k \big\|_{L^2}^2
    \leq C\| G_k \|_{L^2}^2,
    \end{equation*}
we bound
$$\sum_{\nu\in R\mathbb{Z}}
        \big\| \phi_\nu^l F_j \big\|_{L^2_{x,t}} \big\| \phi_\nu^0 G_k \big\|_{L^2_{x,t}}
\leq2^{l/2}\big\| F_j \big\|_{L^2_{x,t}} \big\| G_k \big\|_{L^2_{x,t}}.$$
Combining this and \eqref{sdga}, we conclude that
\begin{align*}
   \nonumber\sum_{\nu\in R\mathbb{Z}}\sum_{l=2}^{\infty} \bigg| \bigg\langle \int_{-\infty}^{t} \Big(e^{i(t-s)(-\Delta)^{a/2}}P_0^2 (\phi_\nu^l F_j)&(\cdot,s)\Big)(x)ds, \phi_\nu^0 G_k(x,t) \bigg\rangle \bigg|\\
    &\lesssim 2^{-\frac{N-n-1}{2}(j+k)}\|F_j\|_{L^2_{x,t}}\|G_k\|_{L^2_{x,t}}
 \end{align*}
for a sufficiently large number $N>0$.
Hence it suffices to show that
\begin{align}\label{inhomo_22_case_3}
\nonumber\sum_{\nu\in R\mathbb{Z}}
\bigg|\bigg\langle\int_{-\infty}^{t} \Big(e^{i(t-s)(-\Delta)^{a/2}}P_0^2(\phi_{\nu} F_j)&(\cdot,s)\Big)(x)ds,\phi_\nu^0 G_k(x,t)\bigg\rangle\bigg|\\
&\lesssim 2^{(\frac{1}{2}+\varepsilon)(j+k)}\|F_j\|_{L^2_{x,t}}\|G_k\|_{L^2_{x,t}},
\end{align}
where $\phi_{\nu}(t):=(\phi_{\nu}^0+\phi_{\nu+}^1+\phi_{\nu-}^1)(t)=\chi_{[v-2R, v+2R)}(t)$.
For this, we first observe that for $0<\varepsilon<1$,
\begin{align}\label{inhomo_22_case_2}
\nonumber\bigg|\bigg\langle\int_{\mathbb{R}} \Big(e^{i(t-s)(-\Delta)^{a/2}}
P_0^2(\phi_{\nu}F_j)&(\cdot,s)\Big)(x)ds, \phi_\nu^0G_k(x,t)\bigg\rangle\bigg|\\
&\lesssim 2^{(\frac{1}{2}-\frac n2(\frac{\varepsilon}{2-\varepsilon}))(j+k)}
\|\phi_{\nu}F_j\|_{L^{2-\epsilon}_{x,t}}\|\phi_{\nu}^0G_k\|_{L^{2-\epsilon}_{x,t}}
\end{align}
which follows from real interpolation between the estimates in Lemma \ref{local_prop}.
Indeed, we first note that from \eqref{interpolation_value_3} and \eqref{interpolation_value},
\begin{equation}\label{interp}
    \bigg| \bigg\langle \int_{\mathbb{R}} \Big(e^{i(t-s)(-\Delta)^{a/2}}P_0^2 F_j(\cdot,s)\Big)(x)ds, G_k(x,t) \bigg\rangle \bigg|
    \lesssim 2^{-\frac{n-1}{2}(j+k)} \|F_j\|_{L^{1}_{x,t}} \|G_k\|_{L^{1}_{x,t}}
    \end{equation}
for $j,k\geq0$, because
$2^{-\frac{2n-1}{4}(j+k)} 2^{-\frac{1}{4}|j-k|}\lesssim 2^{-\frac{n-1}{2}(j+k)}$
in \eqref{interpolation_value}.
Now, by real interpolation between \eqref{interpolation_value_2} and \eqref{interp},
with $\phi_{\nu}F_j$ and $\phi_\nu^0G_k$ instead of $F_j$ and $G_k$, respectively,
we get \eqref{inhomo_22_case_2} after some easy computations.

Using the dual characterisation of $L^p$ spaces and H\"older's inequality,
we also easily see that \eqref{inhomo_22_case_2} is equivalent to
\begin{equation}\label{krist}
\bigg\|\int_{\mathbb{R}}e^{i(t-s)(-\Delta)^{a/2}}
P_0^2(\phi_{\nu}F_j)(\cdot,s)ds\bigg\|_{L^{\frac{2-\varepsilon}{1-\varepsilon}}_{x,t}(I_k)}
\lesssim 2^{(\frac{1}{2}-\frac n2(\frac{\varepsilon}{2-\varepsilon}))(j+k)}
\|\phi_{\nu}F_j\|_{L^{2-\epsilon}_{x,t}}.
\end{equation}
Since $\frac{2-\varepsilon}{1-\varepsilon}>2-\varepsilon$,
we now get \eqref{krist} replacing $\int_{\mathbb{R}}$ with $\int_{-\infty}^t$,
which is equivalent to
\begin{align*}
\bigg|\bigg\langle\int_{-\infty}^t \Big(e^{i(t-s)(-\Delta)^{a/2}}
P_0^2(\phi_{\nu}F_j)&(\cdot,s)\Big)(x)ds, \phi_\nu^0G_k(x,t)\bigg\rangle\bigg|\\
&\lesssim 2^{(\frac{1}{2}-\frac n2(\frac{\varepsilon}{2-\varepsilon}))(j+k)}
\|\phi_{\nu}F_j\|_{L^{2-\epsilon}_{x,t}}\|\phi_{\nu}^0G_k\|_{L^{2-\epsilon}_{x,t}},
\end{align*}
by applying the following Christ-Kiselev lemma with $q=\frac{2-\varepsilon}{1-\varepsilon}$ and $p=2-\varepsilon$.

\begin{lem}[\cite{CK}]
Let $X$ and $Y$ be Banach spaces.
Assume that $T:L^p(\mathbb{R};X)\rightarrow L^q(\mathbb{R};Y)$, $1\leq p <q<\infty$, is a bounded linear operator
defined by
$$Tf(t)= \int_{\mathbb{R}} K(t,s) f(s)ds,$$
where $K:\mathbb{R}\times\mathbb{R}\rightarrow B(X,Y)$ and $B(X,Y)$ is the space of bounded linear transformations from $X$ to $Y$.
Then the operator $T$ replacing $\int_{\mathbb{R}}$ with $\int_{-\infty}^t$
has the same $L^p-L^q$ boundedness.
\end{lem}

\begin{rem}\label{csreas}
This lemma does not hold when $q=p$. So, if we consider directly the estimate without the modification, that is, with $\varepsilon=0$,
the exponents $q=\frac{2-\varepsilon}{1-\varepsilon}$ and $p=2-\varepsilon$ in \eqref{krist} are equivalent each other.
Notice that the lemma does not work for this case.
\end{rem}

Then by H\"older's inequality,
    \begin{align*}
    \bigg| \bigg\langle \int_{-\infty}^{t}& \Big(e^{i(t-s)(-\Delta)^{a/2}}P_0^2 (\phi_{\nu} F_j)(\cdot,s)\Big)(x)ds, \phi_\nu^0 G_k(x,t) \bigg\rangle \bigg|\\
    &\lesssim2^{(\frac{1}{2}-\frac n2(\frac{\varepsilon}{2-\varepsilon}))(j+k)}
    (R2^{jn})^{\frac{\varepsilon}{2(2-\varepsilon)}} (R2^{kn})^{\frac{\varepsilon}{2(2-\varepsilon)}}
    \|\phi_{\nu}F_j\|_{L^{2}_{x,t}} \| \phi_{\nu}^0 G_k\|_{L^{2}_{x,t}}\\
    &\lesssim 2^{(\frac{1}{2} +\epsilon) (j+k)}
        \|\phi_{\nu}F_j\|_{L^{2}_{x,t}} \| \phi_{\nu}^0 G_k\|_{L^{2}_{x,t}}.
    \end{align*}
Here, for the last inequality, we have used the fact that $R=\max(2^j,2^k)\leq2^{j+k}$.
By summing in $\nu$ and using the Cauchy-Schwarz inequality as before,
we get the desired estimate \eqref{inhomo_22_case_3}.


\section{Proof of Theorem \ref{thm0}}\label{sec5}

In this final section, we deduce the well-posedness (Theorem \ref{thm0})
for the Cauchy problem \eqref{higher} from the weighted $L^2$ Strichartz estimates in Theorem \ref{thm_par}
using the fixed point argument.

The starting point is that the solution of \eqref{higher} can be given by the following integral equation
\begin{equation}\label{sol}
u(x,t)= e^{it(-\Delta)^{a/2}}u_0(x) - i\int_0^t \Big(e^{i(t-s)(-\Delta)^{a/2}} F(\cdot,s)\Big)(x)ds
+\Phi(u)(x,t),
\end{equation}
where
$$\Phi(u)(x,t)=-i\int_0^t \Big(e^{i(t-s)(-\Delta)^{a/2}}(Vu)(\cdot,s)\Big)(x)ds.$$
Here we observe that
$$(I-\Phi)(u)=e^{it(-\Delta)^{a/2}}u_0 - i\int_0^t e^{i(t-s)(-\Delta)^{a/2}} F(\cdot,s)ds,$$
where $I$ is the identity operator.
Then, since $u_0\in L^2$ and $F\in L^2(|V|^{-1})$,
by applying the weighted $L^2$ Strichartz estimates in Theorem \ref{thm_par} with $w=|V|$,
we see that
$$(I-\Phi)(u)\in L^2(|V|).$$
Hence, it is enough to show that
the operator $I-\Phi$ has an inverse in the space $L^2(|V|)$, needed for the fixed point argument.
For this, we want to show that the operator norm for $\Phi$ in the space $L^2(|V|)$
is strictly less than $1$.
Namely, we show that $\|\Phi(u)\|_{L^2(|V|)}<\frac12\|u\|_{L^2(|V|)}$.
Indeed, from the inhomogeneous estimate \eqref{inho_par} with $w=|V|$, it follows that
\begin{align}\label{inv}
\nonumber\|\Phi(u)\|_{L^2(|V|)}&\leq C\|V\|_{\mathfrak{L}_{a-par}^{a,p}}\|Vu\|_{L^2(|V|^{-1})}\\
\nonumber&=C\|V\|_{\mathfrak{L}_{a-par}^{a,p}}\|u\|_{L^2(|V|)}\\
&<\frac12\|u\|_{L^2(|V|)}.
\end{align}
Here, for the last inequality, we have used the smallness assumption on the norm $\|V\|_{\mathfrak{L}_{a-par}^{a,p}}$.

On the other hand, from \eqref{sol}, \eqref{inv} and Theorem \ref{thm_par}, we easily see that
\begin{align}\label{1123}
\nonumber\|u\|_{L^2(|V|)}&\leq C\big\|e^{it(-\Delta)^{a/2}}u_0 \big\|_{L^2(|V|)}
+C\bigg\|\int_{0}^{t}e^{i(t-s)(-\Delta)^{a/2}}F(\cdot,s)ds\bigg\|_{L^2(|V|)}\\
&\leq C\|V\|_{\mathfrak{L}_{a-par}^{a,p}}^{1/2}\|u_0\|_{L^2}
+C\|V\|_{\mathfrak{L}_{a-par}^{a,p}}\|F\|_{L_{t,x}^2(|V|^{-1})}.
\end{align}
Now \eqref{1-1} is proved.
To show \eqref{1-2}, we will use \eqref{1123} and the following estimate
\begin{equation}\label{dual}
    \bigg\|\int_{-\infty}^{\infty}e^{-is(-\Delta)^{a/2}}F(\cdot,s)ds\bigg\|_{L_x^2}
    \leq C\|w\|_{\mathfrak{L}_{a-par}^{a,p}}^{1/2}\|F\|_{L^2(w(x,t)^{-1})}
\end{equation}
which is just the dual estimate of \eqref{hop_H_par}.
First, from \eqref{sol}, \eqref{dual} with $w=|V|$, and the simple fact that $e^{it(-\Delta)^{a/2}}$ is an isometry in $L^2$,
it follows that
$$\|u\|_{L_x^2}\leq C\|u_0\|_{L^2}
+C\|V\|_{\mathfrak{L}_{a-par}^{a,p}}^{1/2}\|F\|_{L^2(|V|^{-1})}
+C\|V\|_{\mathfrak{L}_{a-par}^{a,p}}^{1/2}\|Vu\|_{L^2(|V|^{-1})}.$$
Since $\|Vu\|_{L^2(|V|^{-1})}=\|u\|_{L^2(|V|)}$
and $\|V\|_{\mathfrak{L}_{a-par}^{a,p}}$ is small enough,
from this and \eqref{1123}, we now get
$$\|u\|_{L_x^2}\leq C\|u_0\|_{L^2}+C\|V\|_{\mathfrak{L}_{a-par}^{a,p}}^{1/2}\|F\|_{L_{t,x}^2(|V|^{-1})}$$
as desired.
This completes the proof.

\

\noindent\textbf{Acknowledgments.}
The authors thank the anonymous referees for many valuable suggestions which improve our presentation a great deal.



\end{document}